\newtheorem{definition}{Definition}[section]
\newtheorem{lemma}[definition]{Lemma}
\newtheorem{theorem}[definition]{Theorem}
\newtheorem{proposition}[definition]{Proposition}
\newtheorem{corollary}[definition]{Corollary}
\newtheorem{remark}[definition]{Remark}
\def\e{\varepsilon}
\newcommand{\weak}{\rightharpoonup}
\title{Asymptotic analysis of fractional Sobolev spaces on thin films in the low-integrability regime}
\author{Andrea Braides\footnote{Department of Mathematics, University of Rome Tor Vergata, via della ricerca scientifica 1, 00133 Rome, Italy} , Andrea Pinamonti\footnote{Department of Mathematics, University of Trento, via Sommarive 14, 38123 Povo, Italy} , and Margherita Solci\footnote{DADU, Universit\`a di Sassari, piazza Duomo 6, 07041 Alghero, Italy}}
\date{}
\begin{document}

\maketitle
\begin{abstract} 
We study the behaviour of fractional Sobolev spaces $H^s(\Omega_\varepsilon)$ with $s\in(0,1/2)$ defined on ``thin films'' $\Omega_\varepsilon=\omega\times (0,\varepsilon)$ in $\mathbb R^d$, and prove that they tend to the space $H^{s+\frac12}(\omega)$ as $\varepsilon\to 0$. This is made precise by using a notion of dimension-reduction convergence, with respect to which suitably scaled Gagliardo seminorms define equicoercive functionals. Asymptotic results are proved for $s\to 0^+$ and $s\to 1/2^-$.
\smallskip

{\bf MSC codes:} 49J45, 35R11, 73K10.

{\bf Keywords:} $\Gamma$-convergence, non-local functionals, fractional Sobolev spaces, dimension reduction, thin films.

\end{abstract}
\section{Introduction}
This paper complements an analogous analysis for thin films in the ``high integrability'' regime carried out in~\cite{BS25}. There, the behaviour of the (squared) Gagliardo $H^s$-seminorms 
\[
\int_{\Omega_\e\times\Omega_\e}
\frac{|u(x)-u(y)|^2}{|x-y|^{d+2s}}\,dx\,dy
\]
in thin domains $\Omega_\e=\omega\times(0,\e)\subset \mathbb R^d$ as their thickness $\e$ tends to $0$ is examined in the case $s>1/2$. In particular, it is shown that the critical scaling in order to obtain a dimensionally reduced energy (see, e.g.,~\cite{LDR} for problems set in integer Sobolev spaces) is $\e^{3-2s}/(1-s)$, highlighting the combined effect of the geometric scaling and the scale of relevant interactions (up to $\e$), and that the limit of the scaled energies is always a Dirichlet integral in the dimensionally reduced domain with a coefficient depending on $s$. In particular, a Bourgain--Brezis--Mironescu-type result~\cite{BBM} is proved by letting simultaneously $s\to 1^-$ and $\e\to 0^+$, obtaining in the limit
\[
\frac{\mathcal H^{d-1}(S^{d-1})}{2d}\int_\omega |\nabla'u|^2\,dx',
\]
where the prime denotes $(d-1)$-dimensional quantities and variables. This corresponds to the energy obtained by separation of scales, letting first $s\to 1$ and then $\e\to 0$.

We will show that in the case $s<1/2$ the behaviour is completely different. In order to prove that dimension reduction holds, we first follow an approach based on a ``slicing'' lemma in the thin direction. This approach differs from the one in~\cite{BS25}, which instead was based on a discretization-and-averaging argument that is not available for $s<1/2$. The slicing result allows us to estimate the integral, over $\omega$, of the one-dimensional $H^s$-seminorm squared in the $d$-th direction by the Gagliardo seminorm squared multiplied by $\e^{2s-1}$.

The second ingredient in our analysis is the determination of the relevant scaling for a dimensionally reduced limit. This turns out to be the scale of interactions at a finite distance, so that the contribution of $\e$ to the scale of the energy is due to the integral in the vertical direction (twice). This argument allows us to conclude that, setting
\[
F^s_\e(u)= \frac1{\e^2}
\int_{\Omega_\e\times\Omega_\e}
\frac{|u(x)-u(y)|^2}{|x-y|^{d+2s}}\,dx\,dy,
\]
if a sequence of functions $(u_\e)$ satisfies $F^s_\e(u_\e)\le S<+\infty$, then, upon extraction of a subsequence and possibly adding suitable translations, there exists $u\in L^2(\omega)$ such that the functions $v_\e$ defined on $\omega\times(0,1)$ by
\[
v_\e(x',x_d)=u_\e(x',\e x_d)
\]
converge weakly in $L^2(\omega\times(0,1))$ to the function $v$ given by $v(x',x_d)=u(x')$ (we refer to this as {\em dimension-reduction convergence}). Once this compactness result is proved, the $\Gamma$-limit as $\e\to 0$ can be computed with respect to the dimension-reduction convergence, showing that actually $u\in H^{\frac12+s}(\omega)$, and that 
\[
\Gamma\text{-}\lim_{\e\to 0} F^s_\e(u)= \int_{\omega\times\omega}
\frac{|u(x')-u(y')|^2}{|x'-y'|^{d+2s}}\,dx'\,dy',
\]
which is nothing but the squared $H^{\frac12+s}(\omega)$-seminorm, since 
trivially $d+2s= d-1+2\bigl(\tfrac12+s\bigr)$.

We also analyze the case $s=s_\e$ converging to $0$, and show that in this case as well
\[
\Gamma\text{-}\lim_{\e\to 0} F^{s_\e}_\e(u)= \int_{\omega\times\omega}
\frac{|u(x')-u(y')|^2}{|x'-y'|^{d}}\,dx'\,dy',
\]
that is, the squared $H^{\frac12}(\omega)$-seminorm. We note that this holds also for unbounded $\omega$, and in particular for $\omega=\mathbb R^{d-1}$, showing that a Maz'ya--Shaposhnikova-type result (which would give, in the limit, the simple $L^2$-norm squared of $u$, see~\cite{MS}) does not hold. The reason for this difference lies in the fact that the Maz'ya--Shaposhnikova result is a consequence of the non-integrability at infinity of the kernel $|\xi|^{-d-2s}$ as $s\to 0$ in $\mathbb R^d$. This lack of integrability is not an issue in the thin-films case, for which the relevant directions at infinity are the $\xi'$ coordinates in $\mathbb R^{d-1}$, for which the limit kernel $|\xi'|^{-d}$ is integrable at infinity. 

We proceed in our analysis by describing the behaviour at the first scaling at which the compactness lemma no longer ensures that the limit depends only on the variable $x'$; namely, we consider the energies 
\[
\widetilde F^s_\e(u)= \frac1{\e^{1-2s}}
\int_{\Omega_\e\times\Omega_\e}
\frac{|u(x)-u(y)|^2}{|x-y|^{d+2s}}\,dx\,dy.
\]
Since $\widetilde F^s_\e(u)= \e^{1+2s} F^s_\e(u)$, the $\Gamma$-limit is $0$ on all functions in $L^2(\omega\times (0,1))$ depending only on the $x'$-variable, but it is non-trivial for functions with a genuine dependence on the $x_d$-variable. More precisely, we have
\[
\Gamma\text{-}\lim_{\e\to 0} \widetilde F^{s}_\e(v)= C_{s,d}\int_{\omega}\int_0^1\int_0^1\frac{|v(x', x_d)-v(x', y_d)|^2}{|x_d-y_d|^{1+2s}}\, dx_d\, dy_d\, dx',
\]
where the $\Gamma$-limit is computed with respect to the weak convergence in $L^2(\omega\times(0,1))$ of the functions $v_\e(x',x_d)=u_\e(x',\e x_d)$, and
\[
C_{s,d}=\int_{\mathbb R^{d-1}}
\frac{1}{(1+|\xi'|^2)^{\frac{d}{2}+s}}\, d\xi' 
\]
is a {\em hypergeometric integral}. Finally, we show that the asymptotic scaling of the squared $H^{s_\e}(\Omega_\e)$-seminorm of characteristic functions is
$
\frac{\e^{2-2s_\e}}{s_\e}\,.
$

\section{Notation}
We use standard notation for fractional Sobolev spaces (see \cite{leofrac}). In particular, if $A$ is a measurable subset of $\mathbb R^d$ and $s\in(0,1)$, the (squared) $s$-seminorm in $H^s(A)$ is defined as  
\[
\lfloor u\rfloor_s^2(A)=\int_{A}\int_{A} \frac{|u(x)-u(y)|^2}{|x-y|^{d+2s}}\, dx\, dy.
\]

In what follows, $\omega$ will be a bounded Lipschitz subset of $\mathbb R^{d-1}$. For $\varepsilon>0$ we consider the {\em thin film} $\Omega_\varepsilon=\omega\times(0,\varepsilon)\subset\mathbb R^d$. Our aim is to analyse the behaviour of the fractional Sobolev spaces $H^s(\Omega_\varepsilon)$ as $\varepsilon\to 0$, and possibly as $s=s_\varepsilon\to s_0<\frac12$.

We remark that the assumption that $\omega$ is bounded will not be needed for most of our results.


\section{Dimension-reduction compactness}
In this section we prove a preliminary compactness result, which allows us to highlight the relevant scaling regime of the Gagliardo $H^s$-seminorm on $\Omega_\e$. The following lemma partially improves a slicing result in \cite[Lemma 6.36]{leofrac} quantifying the dependence of the scaling. Note that the hypothesis of $\omega$ bounded is not necessary.

\begin{lemma}[slicing on the ``thin'' direction]\label{slixd} There exists $C>0$ such that for all $\e>0$, $s\in(0,1)$, and $u\in H^s(\Omega_\e)$ we have
\begin{equation}\label{slixd-eq}
\int_\omega\int_0^1\int_0^1 \frac{|v(x^\prime,x_d)-v(x^\prime,y_d)|^2}{|x_d-y_d|^{1+2s}}\, dx_d\, dy_d\, dx^\prime\leq C\e^{2s-1}\int_{\Omega_\e}\int_{\Omega_\e} \frac{|u(x)-u(y)|^2}{|x-y|^{d+2s}}\, dx\, dy, 
\end{equation}
where $v\colon\omega\times (0,1)\to \mathbb R$ is the scaled function given by 
$v(x^\prime, t)=u(x^\prime,\e t)$. 
\end{lemma}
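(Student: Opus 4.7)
My plan is to reduce to a scale-invariant slicing estimate by a change of variables, then prove the reduced estimate by an averaged triangle inequality, and finally unwind the averages through Fubini. Substituting $t=\e x_d$ and $\tau=\e y_d$ on the left-hand side of \eqref{slixd-eq} factors out exactly $\e^{2s-1}$, so it suffices to show that, for a constant $C$ independent of $\e>0$ and $s\in(0,1)$,
\[
\int_\omega \int_0^\e \int_0^\e \frac{|u(x',t)-u(x',\tau)|^2}{|t-\tau|^{1+2s}}\,dt\,d\tau\,dx' \,\leq\, C\,\lfloor u\rfloor_s^2(\Omega_\e).
\]

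To prove this, for fixed $x'\in\omega$ and $0<\tau<t<\e$ with $r=t-\tau$, I would combine the triangle inequality
\[
|u(x',t)-u(x',\tau)|^2\leq 2|u(x',t)-u(z',\sigma)|^2+2|u(z',\sigma)-u(x',\tau)|^2
\]
with an average over $(z',\sigma)\in(B(x',r)\cap\omega)\times(\tau+r/3,\,t-r/3)$. The Lipschitz regularity of $\omega$ gives an interior-cone bound $|B(x',r)\cap\omega|\geq c\,r^{d-1}$ (for $r$ below a fixed threshold depending on $\omega$), so the averaging box has measure at least $c'r^d$; moreover, by construction both $|(x',t)-(z',\sigma)|$ and $|(z',\sigma)-(x',\tau)|$ are comparable to $r$. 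Multiplying through, the factor $r^{-(d+1+2s)}$ coming from the averaging weight times $r^{-(1+2s)}$ is thus controlled by $r^{-1}$ times the standard Gagliardo kernel $|(x',t)-(z',\sigma)|^{-(d+2s)}$ (and analogously for the symmetric pair), with an absolute constant since the bounded range $s\in(0,1)$ enters only through the exponent $d+2s$.

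After this pointwise bound, I would apply Fubini and rename $x=(x',t)$, $y=(z',\sigma)$, which are both in $\Omega_\e$. The residual one-dimensional integration in the free variable $\tau$ for fixed $(x,y)$ becomes $\int_0^\e (t-\tau)^{-1}\mathbb{1}_{\mathrm{admissible}}\,d\tau$, where the admissibility conditions translate into $\tau$ lying in an interval of length $\lesssim|t-\sigma|$ on which $t-\tau\gtrsim|t-\sigma|$; hence this integral is bounded by an absolute constant, and the corresponding estimate is exactly $C\,\lfloor u\rfloor_s^2(\Omega_\e)$. A symmetric argument handles the second summand coming from the triangle inequality.

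The main obstacle is keeping $C$ uniform in $\e$: when $r$ exceeds the Lipschitz threshold of $\omega$ the cone estimate $|B(x',r)\cap\omega|\gtrsim r^{d-1}$ no longer holds, and one has to replace the averaging ball by one of fixed radius, relying on the vertical gap $|t-\sigma|\geq r/3$ alone to ensure the comparability $|(x',t)-(z',\sigma)|\gtrsim r$. This step also confirms the remark that $\omega$ need not be bounded for the conclusion.
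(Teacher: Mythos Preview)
Your approach coincides with the paper's: after the change of variables, both use an averaged triangle inequality over a region of diameter comparable to $r=|t-\tau|$, followed by Fubini. The paper averages over the $d$-dimensional ball $B\bigl((x',\tfrac{t+\tau}{2}),\tfrac{r}{4}\bigr)\cap\Omega_\e$ and then integrates the resulting weight $r^{-(d+1+2s)}$ over $\{r\ge \tfrac{4}{3}|x-z|\}$ to produce the kernel $|x-z|^{-(d+2s)}$ directly; your product-set averaging together with the split $r^{-(d+1+2s)}=r^{-1}\cdot r^{-(d+2s)}$ is the same computation with different bookkeeping.

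One correction to your last paragraph: the proposed fix for $r$ above the Lipschitz threshold does not work. Freezing the horizontal ball at radius $r_0$ makes the averaging box of measure only $\sim r_0^{\,d-1}r$ rather than $r^d$, so the bound misses the Gagliardo kernel by a factor $(r/r_0)^{d-1}$; the vertical-gap comparability you invoke is fine, but it cannot compensate for the missing volume. In fact no fix is possible: the estimate with $C$ uniform for \emph{all} $\e>0$ is false (for $d=2$, $\omega=(0,1)$, $u=\chi_{\{x_2<\e/2\}}$, the unscaled left-hand side grows like $\e^{1-2s}$ while $\lfloor u\rfloor_s^2(\Omega_\e)$ remains bounded as $\e\to\infty$). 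The paper's proof explicitly restricts to $\e$ small enough that $r<\e$ never exceeds the Lipschitz threshold, which is all that is used downstream; your main argument already covers this regime. The remark that $\omega$ need not be bounded is a separate point, following simply from the locality of the interior-cone condition and not from the large-$r$ modification.
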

\begin{proof}  
Given $x^\prime\in\omega$ and $x_d\neq y_d\in(0,\e)$, we consider the ball $B=B(x^\prime,x_d,y_d)$ centered at $z^\ast=(x^\prime,\frac{x_d+y_d}{2})$ with radius $\frac{|x_d-y_d|}{4}$. 
Note that, for $\e$ small enough (with respect to the dimensions of $\omega$),  
$|B\cap\Omega_\e|\geq c_\omega|B|$ with $c_\omega>0$ by the Lipschitz assumption on $\omega$. Hence, 
$$\frac{|u(x^\prime,x_d)-u(x^\prime,y_d)|^2}{|x_d-y_d|^{1+2s}}\leq \frac{2}{c_\omega|B|}\int_{B\cap\Omega_\e} \frac{|u(x^\prime,x_d)-u(z)|^2+|u(z)-u(x^\prime,y_d)|^2}{|x_d-y_d|^{1+2s}}\, dz.$$ 
By Fubini's Theorem and by exchanging the role of $x_d$ and $y_d$ in second term of the sum, we then get 
\begin{eqnarray}\label{sli-part}
&&\hspace{-1cm}\int_\omega\int_0^\e\int_0^\e\frac{|u(x^\prime,x_d)-u(x^\prime,y_d)|^2}{|x_d-y_d|^{1+2s}}\, dx_d\, dy_d\, dx^\prime\nonumber\\
&&\leq 
C\int_{\Omega_\e}\int_{\Omega_\e}|u(x)-u(z)|^2 \int_0^\e\chi_{B\cap\Omega_\e}(z) \frac{1}{|x_d-y_d|^{d+1+2s}}\, dy_d\, dx\, dz,
\end{eqnarray}
with $C>0$ only depending on $c_\omega$ and the dimension $d$. 
If $\chi_{B\cap\Omega_\e}(z)=1$, then $|z-z^\ast|\leq \frac{|x_d-y_d|}{4}$, and we can estimate 
$$|x-z|\leq |z-z^\ast|+|x-z^\ast|\leq \frac{|x_d-y_d|}{4} +\frac{|x_d-y_d|}{2};$$ 
We obtain 
\begin{eqnarray*}
\int_0^\e\chi_{B\cap\Omega_\e}(z) \frac{1}{|x_d-y_d|^{d+1+2s}}\, dy_d \leq 2\int_{\frac{4|x-z|}{3}}^{+\infty} t^{-d-1-2s}\, dt=\frac{2}{d+2s}\Big(\frac{3}{4}\Big)^{d+2s} \frac{1}{|x-z|^{d+2s}}.
\end{eqnarray*}
From \eqref{sli-part} it follows that 
\begin{equation}
\int_\omega\int_0^\e\int_0^\e\frac{|u(x^\prime,x_d)-u(x^\prime,y_d)|^2}{|x_d-y_d|^{1+2s}}\, dx_d\, dy_d\, dx^\prime\leq C\int_{\Omega_\e}\int_{\Omega_\e}\frac{|u(x)-u(z)|^2}{|x-z|^{d+2s}}  dx\, dz, 
\end{equation} 
where $C>0$ does not depend on $\e$ and $s$. 
Since 
$$\int_\omega\int_0^\e\int_0^\e\frac{|u(x^\prime,x_d)-u(x^\prime,y_d)|^2}{|x_d-y_d|^{1+2s}}\, dx_d\, dy_d\, dx^\prime=
\e^{1-2s}\int_\omega\int_0^1\int_0^1\frac{|v(x^\prime,t)-v(x^\prime,\tau)|^2}{|t-\tau|^{1+2s}}\, dt\, d\tau\, dx^\prime,
$$
the claim follows.
\end{proof}

\begin{remark}[rigidity]\label{rigidity-rm}\rm 
Lemma \ref{slixd} implies a rigidity result for ``pointwise'' convergence.  
More precisely, given $u\colon\omega\times (0,1)\to\mathbb R$ and defining 
$u_\e(x^\prime,x_d)=u(x^\prime, \frac{x_d}{\e})$, 
by \eqref{slixd-eq} we obtain that 
$$\int_\omega\int_0^1\int_0^1 \frac{|u(x^\prime,x_d)-u(x^\prime,y_d)|^2}{|x_d-y_d|^{1+2s}}\, dx_d\, dy_d\, dx^\prime\leq C\e^{2s-1}\lfloor u_\e\rfloor_{s_\e}^2(\Omega_\e).$$ 
Then, if $\e^{2s-1}\lfloor u_\e\rfloor_{s_\e}^2(\Omega_\e)=o(1)_{\e\to 0}$, the function $u$ does not depend on the $d$-th variable; that is, $u(x^\prime,x_d)=v(x^\prime)$. 
\end{remark}

Lemma \ref{slixd} allows us to prove a compactness result, in the sense of the dimensional-reduction convergence defined as follows. 
\begin{definition}[(weak) dimension-reduction convergence]\label{def-conv}
Let $\{u_\e\}$ be a sequence in $H^{s_\e}(\Omega_\e)$ and let $v_\e\colon\omega\times(0,1)\to\mathbb R$ be defined by $v_\e(x)=u_\e(x^\prime,\e x_d)$. We say 
that $u_\e\to u$ with $u\in L^2(\omega)$ if 
$v_\e\weak v$ in $L^2(\omega\times(0,1))$ and $v(x)=u(x^\prime)$.  
\end{definition}

We start by showing a compactness result for sequences bounded in $L^2$. 

\begin{lemma}[dimensional-reduction compactness -- i]\label{compactness1}  
Let $\{u_\e\}$ be a bounded sequence in $L^2(\omega\times(0,1))$ such that 
$$\lim_{\e\to 0}\frac{1}{\e^{2s_\e-1}} \lfloor u_\e\rfloor_{s_\e}^2(\Omega_\e)=0.$$ 
Then
there exists $u\in L^2(\omega)$ such that, up to the addition of constants and up to subsequences, 
$u_\e\to u$ in the sense of Definition {\rm \ref{def-conv}.} 
\end{lemma}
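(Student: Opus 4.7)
The plan is to use the slicing Lemma~\ref{slixd} to upgrade the vanishing of the rescaled Gagliardo seminorm into \emph{strong} $L^2(\omega\times(0,1))$-convergence of the scaled sequence $v_\e(x',x_d):=u_\e(x',\e x_d)$ to its $x_d$-average. The key observation is that on the unit interval one has the trivial bound $|x_d-y_d|^{1+2s_\e}\le 1$, so that
\[
|v_\e(x',x_d)-v_\e(x',y_d)|^2\le \frac{|v_\e(x',x_d)-v_\e(x',y_d)|^2}{|x_d-y_d|^{1+2s_\e}}.
\]
Integrating over $\omega\times(0,1)^2$, invoking Lemma~\ref{slixd} and the elementary identity $\int_0^1\!\int_0^1|f(t)-f(\tau)|^2\,dt\,d\tau=2\|f-\bar f\|_{L^2(0,1)}^2$, I obtain
\[
2\int_\omega\|v_\e(x',\cdot)-\bar v_\e(x')\|_{L^2(0,1)}^2\,dx'\le C\e^{2s_\e-1}\lfloor u_\e\rfloor_{s_\e}^2(\Omega_\e)\longrightarrow 0,
\]
where $\bar v_\e(x'):=\int_0^1 v_\e(x',t)\,dt$, identified with the constant-in-$x_d$ function on $\omega\times(0,1)$. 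Crucially, the constant here does not depend on $s_\e$, so the estimate is robust in the regime $s_\e\to 0$.

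Next, since $\{v_\e\}$ is bounded in $L^2(\omega\times(0,1))$, Cauchy--Schwarz shows that $\{\bar v_\e\}$ is bounded in $L^2(\omega)$. Banach--Alaoglu provides $u\in L^2(\omega)$ and a subsequence along which $\bar v_\e\weak u$ weakly in $L^2(\omega)$; viewed as functions constant in $x_d$ on $\omega\times(0,1)$, this means $\bar v_\e\weak v$ weakly in $L^2(\omega\times(0,1))$, where $v(x',x_d)=u(x')$. Adding the strong convergence $v_\e-\bar v_\e\to 0$ from the previous step, one concludes $v_\e\weak v$ in $L^2(\omega\times(0,1))$, which is exactly the dimension-reduction convergence of Definition~\ref{def-conv}. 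The clause ``up to the addition of constants'' accommodates a weaker version of the hypothesis in which only the Gagliardo seminorm (not the $L^2$ norm) is controlled: one would then subtract the full average of $u_\e$ over $\Omega_\e$, which leaves the seminorm unchanged, and apply a Poincar\'e--Wirtinger argument to recover $L^2$-boundedness; under the stated $L^2$-boundedness this step is vacuous.

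The main technical point is to extract \emph{strong} $L^2$-convergence to the $x_d$-average in the first step, so as to avoid invoking weak lower semicontinuity of an $s$-Gagliardo seminorm along a fixed $s'<\liminf_\e s_\e$---an argument that breaks down when $s_\e\to 0$. The trivial kernel bound $|x_d-y_d|^{1+2s_\e}\le 1$ sidesteps the singularity and delivers this strong convergence at once, with a constant uniform in $s_\e$; the remaining weak compactness on the cross-section is standard.
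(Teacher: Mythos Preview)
Your proof is correct and follows essentially the same route as the paper: apply the slicing Lemma~\ref{slixd} to show $\|v_\e-\bar v_\e\|_{L^2(\omega\times(0,1))}\to 0$, then combine this with weak compactness in $L^2$ to conclude that the weak limit is independent of $x_d$. The only minor difference is that the paper invokes ``the Poincar\'e inequality'' for the first bound in \eqref{rigidity}, while you make this step explicit via the trivial kernel bound $|x_d-y_d|^{1+2s_\e}\le 1$ and the variance identity $\int_0^1\!\int_0^1|f(t)-f(\tau)|^2\,dt\,d\tau=2\|f-\bar f\|_{L^2(0,1)}^2$; this has the advantage of displaying the uniformity in $s_\e$ transparently. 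The order of extraction is also reversed (the paper first passes to a weak limit $v$ of $v_\e$ and then uses lower semicontinuity of the norm to show $v=\bar v$, whereas you first extract a weak limit of $\bar v_\e$ and then add the strong null sequence), but the two arrangements are equivalent.
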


\begin{proof}
Up to the addition of constants and up to subsequences, we have the weak convergence $v_\e\weak v$. 
To show that $v$ does not depend on the ``thin'' variable, we apply the Poincar\'e inequality and Lemma \ref{slixd}, obtaining 
\begin{eqnarray}\label{rigidity}
\int_\omega\int_0^1 |v_\e(x^\prime, x_d)- \overline v_\e(x^\prime)|^2\, dx^\prime\, dx_d&\leq&C \int_\omega \int_0^1\int_0^1 
\frac{|v_\e(x^\prime, x_d)-v_\e(x^\prime, y_d)|^2}{|x_d-y_d|^{1+2s_\e}}\, dx_d\, dy_d\, dx^\prime\nonumber \\
&\leq&C \e^{2s_\e-1} \lfloor u_\e\rfloor_{s_\e}^2(\Omega_\e)\ \leq \ C \e^{2s_\e+1}=o(1)_\e,
\end{eqnarray}
where $\overline v_\e(x^\prime)=\int_0^1v_\e(x^\prime,t)\, dt$. Since $\overline v_\e\weak \overline v$ in $L^2(\omega)$, 
where $\overline v(x^\prime)=\int_{0}^1v(x^\prime, t)\, dt$, by \eqref{rigidity} and the lower semicontinuity of the norm we get that the weak limit $v$ does not depend on $x_d$. 
\end{proof}

\begin{lemma}[dimensional-reduction compactness -- ii]\label{compactness2} Let $\{u_\e\}$ be such that 
$$\sup_\e \frac{1}{\e^2} \lfloor u_\e\rfloor_{s_\e}^2(\Omega_\e)<+\infty.$$ 
Then
there exists $u\in L^2(\omega)$ such that, up to addition of constants and up to subsequences, 
$u_\e\to u$ in the sense of Definition \rm \ref{def-conv}.  
\end{lemma}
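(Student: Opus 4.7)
The plan is to reduce the statement to Lemma~\ref{compactness1} by deriving an a priori $L^2$-bound, up to additive constants, for the rescaled sequence $(v_\e)$.

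First, I would perform the affine change of variables $x_d=\e t$, $y_d=\e\tau$ in the double integral defining $\lfloor u_\e\rfloor_{s_\e}^2(\Omega_\e)$, obtaining
\[
\frac{1}{\e^2}\lfloor u_\e\rfloor_{s_\e}^2(\Omega_\e)=\int_{\omega\times(0,1)}\!\int_{\omega\times(0,1)}\frac{|v_\e(x',t)-v_\e(y',\tau)|^2}{\bigl(|x'-y'|^2+\e^2(t-\tau)^2\bigr)^{(d+2s_\e)/2}}\,dx'\,dt\,dy'\,d\tau.
\]
Since for $\e\le 1$ the denominator is dominated by $\bigl(|x'-y'|^2+(t-\tau)^2\bigr)^{(d+2s_\e)/2}$, the hypothesis yields the uniform seminorm bound $\lfloor v_\e\rfloor_{s_\e}^2(\omega\times(0,1))\le S$ on the fixed Lipschitz domain.

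Second, I would invoke the fractional Poincar\'e inequality on the bounded Lipschitz set $\omega\times(0,1)$: letting $c_\e$ denote the mean of $v_\e$ on $\omega\times(0,1)$, one obtains a uniform $L^2$-bound on $v_\e-c_\e$, so that along a subsequence $v_\e-c_\e\weak v$ weakly in $L^2(\omega\times(0,1))$.

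Third, the assumption $\lfloor u_\e\rfloor_{s_\e}^2(\Omega_\e)\le S\e^2$ implies $\e^{2s_\e-1}\lfloor u_\e\rfloor_{s_\e}^2(\Omega_\e)\le S\e^{2s_\e+1}=o(1)_\e$, so, exactly as in the proof of Lemma~\ref{compactness1}, Lemma~\ref{slixd} combined with one-dimensional Poincar\'e in the thin variable yields $\|v_\e-\overline{v_\e}\|_{L^2(\omega\times(0,1))}\to 0$, where $\overline{v_\e}(x')=\int_0^1 v_\e(x',t)\,dt$. Combining this with the weak convergence above forces the limit $v$ to be independent of $x_d$, and setting $u(x'):=v(x')$ delivers the required dimension-reduction convergence.

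The main technical subtlety is ensuring that the fractional Poincar\'e constant used in the second step can be chosen uniformly in $s_\e$; this is standard when $s_\e$ stays bounded away from $0$, while the borderline regime $s_\e\to 0^+$ must be handled by working directly with the $x'$-average $\overline{v_\e}$ on $\omega$, which inherits the required compactness through Jensen's inequality applied to the far-range ($|x'-y'|\ge\e$) part of the scaled Gagliardo seminorm, a piece that is controlled by the hypothesis independently of $s_\e$.
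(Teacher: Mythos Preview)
Your approach is correct and essentially the same as the paper's: both obtain an $L^2$ bound for $v_\e$ minus its mean via a fractional Poincar\'e inequality (the paper applies it on $\Omega_\e$ with constant $C/\e$ and then rescales, while you rescale first and apply it on the fixed domain $\omega\times(0,1)$), and then invoke Lemma~\ref{compactness1}. Your concern about uniformity as $s_\e\to 0$ is unnecessary, since the elementary Poincar\'e inequality $\|u-\bar u\|^2_{L^2(A)}\le |A|^{-1}\,\mathrm{diam}(A)^{d+2s}\,\lfloor u\rfloor_s^2(A)$---which is what underlies the cited reference---has a constant bounded uniformly for $s\in(0,1)$, so no separate far-range argument is needed.
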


%
%
%

\begin{proof}
By the Poincar\'e inequality (see \cite[Theorem 6.33]{leofrac}, we have that 
$$\|u_\e - \overline u_\e \|^2_{L^2(\Omega_\e)}\leq \frac{C}{\e}\lfloor u_\e\rfloor_{s_\e}^2(\Omega_\e), $$
where $\overline u_\e=\frac{1}{|\Omega_\e|}\int_{\Omega_\e} u_\e(x)\, dx$.  
Then, 
$$\|v_\e - \overline u_\e \|^2_{L^2(\omega\times(0,1))}\leq \frac{C}{\e^2}\lfloor u_\e\rfloor_{s_\e}^2(\Omega_\e),$$
which is bounded by assumption. The conclusion follows from Lemma \ref{compactness1}
\end{proof}

\section{Analysis at scale $\e^2$}
Lemma \ref{compactness2} suggests the scaling for a dimension reduction limit of the Gagliardo seminorms. This is confirmed by the following theorem.

\begin{theorem}\label{drconv}
Let $s_\e\to s_0<\frac12$; then the $\Gamma$-limit with respect to the weak dimension-reduction convergence in Definition {\rm\ref{def-conv}} of 
$$
F_\e^s(u)=\frac1{\e^2} \lfloor u\rfloor^2_{s_\e}(\Omega_\e)
$$
is given on $L^2(\omega)$ by the $H^{\frac12+s_0}$-seminorm squared on $\omega$; that is, noting that
$d-1+2(\frac12+s_0)=d+2s_0$, by 
$$
F^s_0(u)=\int_\omega\int_\omega 
\frac{|u(x')-u(y')|^2}{|x'-y'|^{d+2s_0}}dx' dy'.
$$
\end{theorem}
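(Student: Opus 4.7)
The plan is to perform a change of variables $x_d=\e t$, $y_d=\e\tau$ first, which moves the integrations onto the fixed cylinder $\omega\times(0,1)$ and cancels the $\e^{-2}$ factor via the Jacobian, giving
\[
F_\e^{s_\e}(u_\e)=\int_{\omega\times(0,1)}\int_{\omega\times(0,1)} \frac{|v_\e(x)-v_\e(y)|^2}{(|x'-y'|^2+\e^2(x_d-y_d)^2)^{(d+2s_\e)/2}} \,dx\,dy,
\]
with $v_\e(x',x_d)=u_\e(x',\e x_d)$. The kernel $K_\e$ converges pointwise for $x'\neq y'$ to $K_0(x',y')=|x'-y'|^{-(d+2s_0)}$, which is independent of $x_d,y_d$; morally the limit energy is then obtained by integrating this pointwise limit over $(x_d,y_d)\in(0,1)^2$, producing the claimed $F_0^{s_0}$.

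For the liminf inequality I would assume $v_\e\weak v$ in $L^2(\omega\times(0,1))$ with $v(x',x_d)=u(x')$, fix $\delta>0$, and restrict the rescaled integral to the set $A_\delta=\{(x,y):|x'-y'|>\delta\}$, where $K_\e$ converges uniformly to $K_0$ and $K_0$ is bounded. A standard split of $\int_{A_\delta} K_\e|v_\e(x)-v_\e(y)|^2\,dx\,dy$ into $\int_{A_\delta} K_0|v_\e(x)-v_\e(y)|^2\,dx\,dy$ plus an error controlled by $\|K_\e-K_0\|_{L^\infty(A_\delta)}\,\|v_\e\|_{L^2}^2$, combined with the weak lower semicontinuity of the bounded convex quadratic form $w\mapsto\int_{A_\delta}K_0|w(x)-w(y)|^2\,dx\,dy$ on $L^2$, would yield
\[
\liminf_{\e\to 0} F_\e^{s_\e}(u_\e)\ge \int_{A_\delta} K_0(x',y')\,|u(x')-u(y')|^2\,dx\,dy.
\]
Sending $\delta\to 0^+$ via monotone convergence and integrating the constant $1$ over $(x_d,y_d)\in(0,1)^2$ then gives the lower bound $F_0^{s_0}(u)$.

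For the limsup, the natural recovery sequence is $u_\e(x',x_d)=u(x')$, i.e., $v_\e\equiv v$, which trivially converges in the dimension-reduction sense. For Lipschitz $u$ the bound $|u(x')-u(y')|^2\le L^2|x'-y'|^2$ controls the integrand by $L^2|x'-y'|^{2-d-2s_\e}$; since $s_\e\le s_0+\eta<1/2$ eventually, this admits a uniform integrable majorant on $\omega\times\omega\times(0,1)^2$ and dominated convergence gives $F_\e^{s_\e}(u_\e)\to F_0^{s_0}(u)$. For general $u\in H^{1/2+s_0}(\omega)$ I would approximate $u$ in the $H^{1/2+s_0}$-norm by Lipschitz functions and invoke the lower semicontinuity of the $\Gamma$-$\limsup$ together with a standard diagonal argument; when $u\notin H^{1/2+s_0}(\omega)$ the inequality is vacuous.

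The liminf step is where the main difficulty lies: the kernel is singular on the diagonal $\{x'=y'\}$, and both the location of the singularity (via the $\e^2|x_d-y_d|^2$ regularization) and the exponent $s_\e$ change with $\e$, so the passage to the weak limit must be performed only after these two perturbations have been decoupled from the quadratic form by the cutoff $A_\delta$. Once this decoupling is in place, the remaining arguments are routine applications of weak lower semicontinuity, density, and dominated convergence.
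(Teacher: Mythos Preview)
Your proposal is correct and follows essentially the same route as the paper. Both arguments rescale to the fixed cylinder, then for the liminf restrict to $\{|x'-y'|>\delta\}$ where the kernel converges uniformly, invoke weak lower semicontinuity of the resulting bounded quadratic form, and let $\delta\to 0$; for the limsup both take the constant-in-$x_d$ recovery sequence for Lipschitz $u$ and conclude by density. The only cosmetic difference is in the upper bound: the paper splits the domain into $\{|x-y|<2\e\}$, $\{2\e<|x'-y'|<\delta\}$, and $\{|x'-y'|>\delta\}$ and estimates each piece by hand, whereas your single dominated-convergence step with the majorant $L^2|x'-y'|^{2-d-2(s_0+\eta)}$ (integrable in $\mathbb R^{d-1}$ precisely because $s_0+\eta<\tfrac12$) is a slightly cleaner packaging of the same computation.
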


\begin{proof}
{\bf Lower bound.} 
By compactness the limit is independent of $x_d$.
If $u_\e\to u$; that is, $v_\e\weak u$ in $L^2(\omega\times(0,1))$, then for all fixed $\delta>0$ and $s=s_\e\to 0$
\begin{eqnarray*}\liminf_{\e\to 0}\frac{1}{\e^2} \lfloor u_\e\rfloor_{s_\e}^2(\Omega_\e)
&\ge& \liminf_{\e\to 0}\frac{1}{\e^2}\int_{(\Omega_\e\times \Omega_\e)\cap \{|x'-y'|>\delta\}}\frac{|u_\e(x)-u_\e(y)|^2}{|x-y|^{d+2s_\e}}dx dy
\\
&=&\liminf_{\e\to 0}\int_0^1\int_0^1\int_{(\omega\times\omega)\cap \{|x'-y'|>\delta\}}\frac{|v_\e(x)-v_\e(y)|^2}{|x'-y'|^{d+2s_\e}}dx dy
\\
&\ge&\int_0^1\int_0^1\int_{(\omega\times\omega)\cap \{|x'-y'|>\delta\}}\frac{|u(x')-u(y')|^2}{|x'-y'|^{d+2s_0}}dx dy
\end{eqnarray*}
The lower bound is optimized by letting $\delta\to0$.

\bigskip
{\bf Upper bound.} For $u$ Lipschitz, note that
\begin{eqnarray*}
&&\frac{1}{\e^2}\int_{(\Omega_\e\times \Omega_\e)\cap \{|x'-y'|<\delta\}}\frac{|u(x)-u(y)|^2}{|x-y|^{d+2s_\e}}dx dy
\\
&&\le \frac{1}{\e^2}\Big(\int_{(\Omega_\e\times \Omega_\e)\cap \{|x-y|< 2\e\}}\frac{|u(x)-u(y)|^2}{|x-y|^{d+2s_\e}}dx dy
\\&&\qquad +
C\int_{(\Omega_\e\times \Omega_\e)\cap \{2\e<|x'-y'|<\delta\}}\frac{|u(x)-u(y)|^{2}}{|x'-y'|^{d+2s_\e}}dx dy\Big)
\\
&&\le \frac{C}{\e^2}\Big(\int_{(\Omega_\e\times \Omega_\e)\cap \{|x-y|< 2\e\}}\frac{1}{|x-y|^{d-2+2s_\e}}dx dy
\\&&\qquad +
\e^2\int_{(\omega\times\omega)\cap \{2\e<|x'-y'|<\delta\}}\frac{1}{|x'-y'|^{d-2+2s}}dx dy\Big)
\\
&&\le \frac{C}{\e^2}\Big(\e\int_0^{2\e}t^{1-2s_\e}dt
+
\e^2\int_{\e}^{\delta}t^{-2s_\e}dt\Big)
\\
&&\le C\big(\e^{1-2s_\e}
+
\delta^{1-2s_\e}\big),
\end{eqnarray*}
while
\begin{eqnarray*}
&&\lim_{\e\to 0}\frac{1}{\e^2}\int_{(\Omega_\e\times \Omega_\e)\cap \{|x'-y'|>\delta\}}\frac{|u(x')-u(y')|^2}{|x-y|^{d+2s_\e}}dx dy
\\ &&\qquad\qquad
=\int_{(\omega\times\omega)\cap \{|x'-y'|>\delta\}}\frac{|u(x')-u(y')|^2}{|x'-y'|^{d+2s_0}}dx dy.
\end{eqnarray*}
Hence, 
$$\limsup_{\e\to 0}\frac{1}{\e^2} \lfloor u\rfloor_{s_\e}^2(\Omega_\e)
\le\int_{\omega\times\omega}\frac{|u(x')-u(y')|^2}{|x'-y'|^{d+2s_0}}dx dy,
$$
by the arbitrariness of $\delta$.

For $u\in H^{\frac12+s_0}(\omega)$ the result is obtained by approximations.
\end{proof}

\begin{corollary} 
Let $\lambda(s_\e, \e)>\!> \e^2$. Then 
$$\Gamma\hbox{\rm-}\lim_{\e\to 0}\frac{1}{\lambda(s_\e,\e)}\lfloor u\rfloor_{s_\e}^2(\Omega_\e)=0$$ 
for all $u\in L^{2}(\omega)$, where the $\Gamma$-limit is taken with respect to the convergence in Definition {\rm\ref{def-conv}}. Furthermore, 
if $u$ is Lipschitz, then also the pointwise limit is $0$. 
\end{corollary}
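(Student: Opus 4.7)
The plan is to verify the two $\Gamma$-convergence inequalities by reducing everything to the upper-bound estimate already obtained in the proof of Theorem \ref{drconv}.

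The $\Gamma$-liminf is immediate: since $\lfloor\cdot\rfloor_{s_\e}^2(\Omega_\e)\ge 0$ and $\lambda(s_\e,\e)>0$, the quotient is non-negative, so the $\Gamma$-liminf along any dimension-reduction converging sequence is bounded below by $0$, matching the candidate $\Gamma$-limit.

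The core step is the pointwise statement for Lipschitz $u$. I extend $u$ constantly in the thin direction, setting $u_\e(x',x_d)=u(x')$ on $\Omega_\e$. The associated scaled function is then $v_\e\equiv u$ for every $\e$, so dimension-reduction convergence $u_\e\to u$ is automatic (in fact strong in $L^2(\omega\times(0,1))$). Re-running the upper-bound computation of Theorem \ref{drconv} (splitting the double integral into the regions $|x'-y'|<\delta$ and $|x'-y'|>\delta$, controlling the near-diagonal part by the Lipschitz bound, and letting $\delta$ be chosen after $\e\to 0$ on the far part) yields a uniform estimate
\[
\frac{1}{\e^2}\lfloor u\rfloor_{s_\e}^2(\Omega_\e)\le C_u<+\infty,
\]
with $C_u$ depending only on $u$ and $s_0$. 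Multiplying by $\e^2/\lambda(s_\e,\e)=o(1)$ proves the pointwise vanishing and, at the same time, supplies a recovery sequence for the $\Gamma$-limsup whenever $u$ is Lipschitz.

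To extend the $\Gamma$-limsup to arbitrary $u\in L^2(\omega)$ I use a density-plus-diagonal argument. I pick Lipschitz functions $u_n$ with $u_n\to u$ in $L^2(\omega)$; their constant-in-$x_d$ extensions $u_{n,\e}$ dimension-reduction converge to $u_n$, and by the previous paragraph $\lambda(s_\e,\e)^{-1}\lfloor u_n\rfloor_{s_\e}^2(\Omega_\e)\to 0$ as $\e\to 0$ for each fixed $n$. Attouch's diagonalization lemma then selects $n(\e)\to\infty$ slowly enough that $u_{n(\e),\e}$ still dimension-reduction converges to $u$ while $\lambda(s_\e,\e)^{-1}\lfloor u_{n(\e)}\rfloor_{s_\e}^2(\Omega_\e)\to 0$. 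Together with the trivial liminf bound, this gives the $\Gamma$-limit $\equiv 0$ on all of $L^2(\omega)$. No step is truly delicate; the only point worth noting is that $L^2(\omega)$-convergence $u_n\to u$ transfers to $L^2$-convergence of the constant-in-$x_d$ extensions on $\omega\times(0,1)$ by Fubini, which is exactly what makes the diagonalization compatible with the dimension-reduction topology.
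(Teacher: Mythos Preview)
Your proof is correct and follows the approach the paper intends; indeed the paper states this as an unproved corollary of Theorem~\ref{drconv}, and your argument---trivial liminf, the upper-bound estimate of Theorem~\ref{drconv} giving $\limsup_{\e\to0}\e^{-2}\lfloor u\rfloor_{s_\e}^2(\Omega_\e)<\infty$ for Lipschitz $u$, then density and diagonalization---is precisely the natural way to fill it in. One cosmetic remark: the upper-bound computation in Theorem~\ref{drconv} yields a bound on the $\limsup$ rather than a genuinely uniform-in-$\e$ estimate, but that is all you need, since a bounded $\limsup$ multiplied by the vanishing factor $\e^2/\lambda(s_\e,\e)$ still tends to~$0$.
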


\section{Dimensional-reduction regimes} 
As a consequence of the previous sections, we can analyze the behaviour of energies of the form $$\frac{1}{\lambda(s_\e,\e)}\lfloor u\rfloor_{s_\e}^2(\Omega_\e)$$ 
as $\e\to 0$, $s_\e\to s_0\in [0,\frac{1}{2})$, and $$\lambda(s_\e,\e)<\!<\e^{1-2s}.$$
Thanks to Lemma \ref{compactness1}, in all these regimes we have dimensional reduction; that is, the domain of the $\Gamma$-limit is a subset of $L^2(\omega)$. 
In particular, from Theorem \ref{drconv} we obtain 
\begin{enumerate}
\item[(i)] if $\lambda<\!<\e^2$, then 
$$\Gamma\hbox{\rm-}\lim_{\e\to 0}\frac{1}{\lambda(s_\e,\e)}\lfloor u\rfloor_{s_\e}^2(\Omega_\e)=F_0(u)=\begin{cases} 
0 & \hbox{\rm if }\ u \ \hbox{\rm is constant}\\
+\infty & \hbox{\rm otherwise; }
\end{cases}$$   
\item[(ii)] ($\lambda\sim\e^2$) 
$$\Gamma\hbox{\rm-}\lim_{\e\to 0}\frac{1}{\e^2}\lfloor u\rfloor_{s_\e}^2(\Omega_\e)=\int_\omega\int_\omega 
\frac{|u(x')-u(y')|^2}{|x'-y'|^{d+2s_0}}\, dx^\prime \, dy^\prime=\lfloor u\rfloor_{s_0+\frac12}^2(\omega);$$ 
\item[(iii)] if $\e^2<\!<\lambda<\!<\e^{1-2s_\e}$, then 
$$\Gamma\hbox{\rm-}\lim_{\e\to 0}\frac{1}{\lambda(s_\e,\e)}\lfloor u\rfloor_{s_\e}^2(\Omega_\e)=0.
$$  
\end{enumerate}

Note that by the Brezis--Bourgain--Mironescu result applied in $\omega$ we have 
$$
\Gamma\hbox{\rm-}\lim_{s\to \frac12^-}
\big(\frac12-s\big)\lfloor u\rfloor_{s+\frac12}^2(\omega)=
\frac{\mathcal H^{d-2}(S^{d-2})}{2(d-1)}\int_\omega |\nabla'u|^2dx',
$$
which gives an approximate description of case (ii) as $s_0\to\frac12^-$, and a link with the results in \cite{BS25}.

\section{Limit energies defined on full-dimensional sets}
For $\tau>0$ small enough, let $\omega_\tau=\{x^\prime\in\omega: \ \hbox{\rm dist}(x^\prime,\partial\omega)>\tau\}$ and $\Omega_\e^\tau=\omega_\tau\times (0,\e)$. 

\begin{lemma}[estimate of the seminorm in the thin direction]\label{vert-lemma}
Let $u_\e\colon\Omega_\e\to \mathbb R$ be such that there exists $L>0$ such that for all $\e>0$ 
\begin{equation}\label{equilip}
|u_\e(x^\prime, x_d)-u_\e(y^\prime, x_d)|\leq L|x^\prime-y^\prime|
\end{equation}
for all $x^\prime,y^\prime\in\omega$ and $x_d\in (0,\e)$. 
Then, with fixed $\eta>0$ and $\tau>0$,  
\begin{eqnarray}\label{vert-est1}
&&(1+\eta)\lfloor u_\e\rfloor_{s_\e}^2(\Omega_\e)+\frac{\e\phi(\e)}{\eta}\geq C_{s_\e,d} \int_{\omega_\tau}\int_0^\e\int_0^\e\frac{|u_\e(x^\prime, x_d)-u_\e(x^\prime, y_d)|^2}{|x_d-y_d|^{1+2s_\e}}\, dx_d\, dy_d\, dx^\prime\qquad \\
&&\lfloor u_\e\rfloor_{s_\e}^2(\Omega^\tau_\e)-\frac{\e\phi(\e)}{\eta}\leq (1+\eta)C_{s_\e,d} \int_{\omega_\tau}\int_0^\e\int_0^\e\frac{|u_\e(x^\prime, x_d)-u_\e(x^\prime, y_d)|^2}{|x_d-y_d|^{1+2s_\e}}\, dx_d\, dy_d\, dx^\prime,\qquad \label{vert-est2}
\end{eqnarray}
where 
\begin{equation}\label{defC}
C_{s,d}=\int_{\mathbb R^{d-1}}
\frac{1}{(1+|\xi|^2)^{\frac{d}{2}+s}}\, d\xi 
\end{equation}
and $\phi$ only depends on $L$, $\omega$, $\tau$, and on the supremum of the $\infty$ norms of $u_\e$, and it is such that $\lim\limits_{\e\to 0}\phi(\e)=0$. 
\end{lemma}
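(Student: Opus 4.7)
The plan rests on the identity
\[
\int_{\mathbb R^{d-1}} \frac{dy'}{(|x'-y'|^2+|x_d-y_d|^2)^{(d+2s)/2}}=\frac{C_{s,d}}{|x_d-y_d|^{1+2s}},
\]
obtained via the translation $z=y'-x'$ followed by the rescaling $z=|x_d-y_d|\xi'$, which explains the appearance of the constant $C_{s,d}$ in \eqref{defC} and bridges the vertical seminorm on the right-hand sides of \eqref{vert-est1}--\eqref{vert-est2} with the full Gagliardo kernel on $\Omega_\e$. The second ingredient is the Lipschitz hypothesis \eqref{equilip}, which I use through the triangle-inequality splitting $u_\e(x',x_d)-u_\e(y',y_d)=[u_\e(x',x_d)-u_\e(x',y_d)]+[u_\e(x',y_d)-u_\e(y',y_d)]$ and the Young bound $(a+b)^2\le(1+\eta)a^2+(1+\eta^{-1})b^2$, which lets me swap the ``full'' and ``vertical'' squared differences at the price of an additive $(1+\eta^{-1})L^2|x'-y'|^2$ term.

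For \eqref{vert-est2}, I start from the definition of $\lfloor u_\e\rfloor^2_{s_\e}(\Omega_\e^\tau)$, apply the Young bound to each integrand, and in the leading (nonnegative) term I enlarge the domain of integration in $y'$ from $\omega_\tau$ to $\mathbb R^{d-1}$; the key identity then yields exactly $(1+\eta)C_{s_\e,d}$ times the right-hand side of \eqref{vert-est2}. The Lipschitz remainder becomes
\[
(1+\eta^{-1})L^2\int_{\omega_\tau}\int_0^\e\int_0^\e\int_\omega\frac{|x'-y'|^2}{(|x'-y'|^2+|x_d-y_d|^2)^{(d+2s_\e)/2}}\,dy'\,dy_d\,dx_d\,dx',
\]
which, after the polar/rescaling trick, reduces to a bounded multiple of $\e^2$ and contributes the required $\e\phi(\e)/\eta$ correction.

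For \eqref{vert-est1}, the argument is dual: I rewrite the right-hand side by inserting the $y'$-integral against $\mathbb R^{d-1}$ through the key identity, then split the $y'$-region as $\omega\cup(\mathbb R^{d-1}\setminus\omega)$. On $\omega$, the same Young bound (applied now to the vertical squared difference expressed as a full difference plus a horizontal Lipschitz correction) produces $(1+\eta)\lfloor u_\e\rfloor^2_{s_\e}(\Omega_\e)$ plus the same $O(\e^2)$ Lipschitz remainder, since $x'\in\omega_\tau\subset\omega$ and $y'\in\omega$ ensure $(x,y)\in\Omega_\e\times\Omega_\e$. On the complementary region, the condition $x'\in\omega_\tau$ forces $|x'-y'|\ge\tau$, so the kernel is bounded; the crude estimate $|u_\e(x',x_d)-u_\e(x',y_d)|^2\le 4\|u_\e\|_\infty^2$ combined with direct integration of $|x'-y'|^{-(d+2s_\e)}$ over $\{|x'-y'|\ge\tau\}$ yields an extra error of order $\|u_\e\|_\infty^2\e^2/\tau^{1+2s_\e}$.

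The main obstacle will be keeping all the error terms uniform in $s_\e\to s_0$. The cross-term integral, after polar coordinates in $y'-x'$ and the rescaling $r=|x_d-y_d|\rho$, equals a constant times $|x_d-y_d|^{1-2s_\e}\int_0^{R/|x_d-y_d|}\rho^d(\rho^2+1)^{-(d+2s_\e)/2}\,d\rho$, where $R$ controls $\operatorname{diam}\omega$; the integrand decays like $\rho^{-2s_\e}$ at infinity, so the inner integral is $O((R/|x_d-y_d|)^{1-2s_\e})$ and cancels the prefactor, leaving a bound uniform in $|x_d-y_d|$ but diverging as $s_\e\to 1/2^-$, which is consistent with the standing assumption $s_0<1/2$. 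Packaging the two errors together (and taking $\eta$ bounded, which is the relevant range in applications) produces $\phi(\e)=O(\e)$ with constants depending only on $L$, $\omega$, $\tau$ and $\sup_\e\|u_\e\|_\infty$, as required.
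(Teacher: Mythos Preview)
Your argument is correct and mirrors the paper's own proof: both rely on the same change-of-variables identity expressing $C_{s,d}|x_d-y_d|^{-1-2s}$ as the $y'$-integral of the Gagliardo kernel, followed by the triangle/Young splitting into a ``full'' difference and a ``horizontal'' Lipschitz correction, with the contribution from $y'\in\mathbb R^{d-1}\setminus\omega$ controlled via $|x'-y'|\ge\tau$ and $\|u_\e\|_\infty$. The only minor differences are organisational---the paper packages the three integrands $I^{\rm v}_{\e,\tau}$, $I^{\rm h}_{\e,\tau}$, $I_{\e,\tau}$ once and applies the two triangle inequalities symmetrically, whereas you treat \eqref{vert-est1} and \eqref{vert-est2} separately---and in the error bound, where the paper obtains $\phi(\e)\to0$ by dominated convergence while your polar/rescaling computation gives the slightly sharper $\phi(\e)=O(\e)$ (with the expected $(1-2s_\e)^{-1}$ dependence).
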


\begin{proof} 
We extend $u_\e$ to $\mathbb R^{d-1}\times(0,\e)$ by setting $u_\e(x^\prime,x_d)=0$ for $x^\prime\in\mathbb R^{d-1}\setminus\omega$. 
Note that, by a change of variable, 
for all $x_d\neq y_d$ and $x^\prime\in\omega$ 
we have 
\begin{eqnarray}
\frac{C_{s,d}}{|x_d-y_d|^{1+2s}}&=&
\int_{\mathbb R^{d-1}} \frac{|x_d-y_d|^{d-1}}{(|x_d-y_d|^2+(|x_d-y_d||\xi|)^2)^{\frac{d}{2}+s}}\, d\xi\nonumber\\
&=&
\int_{\mathbb R^{d-1}} \frac{1}{(|x_d-y_d|^2+|x^\prime-y^\prime|^2)^{\frac{d}{2}+s}}\, dy^\prime,\label{rd1}
\end{eqnarray} 
so that, by using Fubini's Theorem, we can write  
\begin{eqnarray}
&&\hspace{-2cm}C_{s_\e,d}\int_{\omega_\tau}\int_0^\e\int_0^\e\frac{|u_\e(x^\prime, x_d)-u_\e(x^\prime, y_d)|^2}{|x_d-y_d|^{1+2s_\e}}\, cx_d\, dy_d\, dx^\prime\nonumber\\
&=&
\int_{\omega_\tau}\int_0^\e\int_0^\e\int_{\mathbb R^{d-1}} \frac{|u_\e(x^\prime, x_d)-u_\e(x^\prime, y_d)|^2}{(|x_d-y_d|^2+|x^\prime-y^\prime|^2)^{\frac{d}{2}+s_\e}}\, dy^\prime\, dx_d\, dy_d\, dx^\prime.\label{cvar}
\end{eqnarray}
Setting, for $A$ measurable subset of $\mathbb R^{d-1}$, 
\begin{eqnarray*}
&&I^{\rm v}_{\e,\tau}(A)=\int_{\omega_\tau}\int_0^\e\int_0^\e\int_A \frac{|u_\e(x^\prime, x_d)-u_\e(x^\prime, y_d)|^2}{(|x_d-y_d|^2+|x^\prime-y^\prime|^2)^{\frac{d}{2}+s_\e}}\, dy^\prime\, dx_d\, dy_d\, dx^\prime\\ 
&&I^{\rm h}_{\e,\tau}(A)=\int_{\omega_\tau}\int_0^\e\int_0^\e\int_A \frac{|u_\e(y^\prime, y_d)-u_\e(x^\prime, y_d)|^2}{(|x_d-y_d|^2+|x^\prime-y^\prime|^2)^{\frac{d}{2}+s_\e}}\, dy^\prime\, dx_d\, dy_d\, dx^\prime  \\
&&I_{\e,\tau}(A)= \int_{\omega_\tau}\int_0^\e\int_0^\e\int_A\frac{|u_\e(x^\prime, x_d)-u_\e(y^\prime, y_d)|^2}{(|x_d-y_d|^2+|x^\prime-y^\prime|^2)^{\frac{d}{2}+s_\e}}\, dy^\prime\, dx_d\, dy_d\, dx^\prime, 
\end{eqnarray*}
a triangular inequality gives 
\begin{eqnarray}
&&I_{\e,\tau}^{\rm v}(\mathbb R^{d-1})\leq (1+\eta)I_{\e,\tau}(\mathbb R^{d-1})+\frac{1}{\eta}I_{\e,\tau}^{\rm h}(\mathbb R^{d-1})\label{tr1}\\
&&I_{\e,\tau}(\mathbb R^{d-1})\leq (1+\eta)I_{\e,\tau}^{\rm v}(\mathbb R^{d-1})+\frac{1}{\eta}I_{\e,\tau}^{\rm h}(\mathbb R^{d-1})\label{tr2}
\end{eqnarray}
for all $\eta>0$. 
We now show that 
\begin{equation}\label{stima-or}
I_{\e,\tau}^{\rm h}(\mathbb R^{d-1})\leq\e\phi(\e),
\end{equation}
where $\phi$ depends on $L$, $\omega$ and $\tau$, and $\lim\limits_{\e\to 0}\phi(\e)=0$.
Noting that for all $x^\prime\in\omega_\tau$ 
\begin{equation*}
\int_{\mathbb R^{d-1}\setminus\omega} \frac{1}{(|x_d-y_d|^2+|x^\prime-y^\prime|^2)^{\frac{d}{2}+s_\e}} dy^\prime
\leq\int_{\mathbb R^{d-1}\setminus B_\tau(0)} \frac{1}{|\xi^\prime|^{d+2s_\e}}\, d\xi^\prime
=\frac{\sigma_{d-1}}{(1+2s_\e)\tau^{1+2s_\e}}, 
\end{equation*}
where $\sigma_{d-1}=\mathcal H^{d-2}(\partial B_1(0))$, 
we get 
\begin{equation}
I^{\rm h}_{\e,\tau}(\mathbb R^{d-1}\setminus \omega)
\leq \frac{\sigma_{d-1}\|u_\e\|^2_\infty |\omega_\tau|}{\tau^{1+2s_\e}}\e^2\label{st1}\\
\end{equation}
Hence, we only have to estimate the integral in the set where $y^\prime\in\omega$. 
We have 
\begin{eqnarray*}
I^{\rm h}_{\e,\tau}(\omega)
&\leq& 
2L^2\int_{\omega_\tau}\int_0^\e\int_{\omega}|x^\prime-y^\prime|^2\int_0^\e\frac{1}{(t^2+|x^\prime-y^\prime|^2)^{\frac{d}{2}+s}}\, dt\, dy^\prime\, dy_d\, dx^\prime\\
&\leq& 
2L^2\e\phi(\e)
\end{eqnarray*}
where 
\begin{equation}\label{defphi}\phi(\e)=
\int_{\omega}\int_{\omega}|x^\prime-y^\prime|^2
\int_0^\e\frac{1}{(t^2+|x^\prime-y^\prime|^2)^{\frac{d}{2}+s}}\, dt
\, dy^\prime\, dx^\prime.
\end{equation}
Since the integrand in \eqref{defphi} pointwise converges to $0$ almost everywhere, and it is estimated by 
\begin{eqnarray*}
|x^\prime-y^\prime|^2\int_0^\e\frac{1}{(t^2+|x^\prime-y^\prime|^2)^{\frac{d}{2}+s}}\, dt
&\leq&|x^\prime-y^\prime|^{3-d-2s_\e} \int_0^{+\infty}\frac{1}{(1+w^2)^{\frac{d}{2}+s_\e}} d w \\
&\leq&\hbox{\rm diam}(\omega)^{1-2s_\e} |x^\prime-y^\prime|^{2-d}\int_0^{+\infty}\frac{1}{(1+w^2)^{\frac{d}{2}+s_\e}} d w, 
\end{eqnarray*}
with
$$\int_{\omega}\int_{\omega}|x^\prime-y^\prime|^{2-d}\, dx^\prime\, dy^\prime<+\infty,$$ 
by Lebesgue's Theorem we obtain 
$$\lim_{\e\to 0}\phi(\e)=0.$$
Since \eqref{st1} holds, this implies \eqref{stima-or}.

Noting that \eqref{st1} holds also for $I_{\e,\tau}(\mathbb R^{d-1}\setminus\omega)$, we can write
\begin{equation}\label{stima-semi}
I_{\e,\tau}(\mathbb R^{d-1})
= I_{\e,\tau}(\omega)+r_\e, 
\end{equation}
with $|r_\e|\leq \sigma_{d-1}\|u_\e\|^2_\infty |\omega_\tau|\tau^{-1-2s_\e}\e^2$. 

Recalling \eqref{tr1} and \eqref{tr2}, estimates \eqref{stima-or} and \eqref{stima-semi} imply that 
\begin{eqnarray*}
I_{\e,\tau}^{\rm v}(\mathbb R^{d-1})&\leq&(1+\eta)(I_{\e,\tau}(\omega)+|r_\e|)+\frac{\e\phi(\e)}{\eta}\\
&\leq&(1+\eta)\lfloor u_\e\rfloor_{s_\e}^2(\Omega_\e)
+C\e^2\tau^{-1-2s_\e}
+\frac{\e\phi(\e)}{\eta}
\end{eqnarray*}
\begin{eqnarray*}
\lfloor u_\e\rfloor_{s_\e}^2(\Omega_\e^\tau)-C\e^2\tau^{-1-2s_\e}&\leq& I_{\e,\tau}(\omega)-|r_\e|\\
&\leq&(1+\eta)I_{\e,\tau}^{\rm v}(\mathbb R^{d-1})+\frac{\e\phi(\e)}{\eta}, 
\end{eqnarray*}
and since \eqref{cvar} holds the claim follows. \end{proof}



\begin{definition}\label{def2}
Let $u_\e\colon\Omega_\e\to\mathbb R$ and $v\colon\omega\times(0,1)\to\mathbb R$. Setting $v_\e(x^\prime, x_d)=u_\e(x^\prime, \e x_d)$, we say that $u_\e\to v$ if 
$v_\e\to v$ in $L^2_{\rm loc}(\omega\times(0,1))$. 
\end{definition}

\begin{theorem} 
Let $s_\e\to s_0\in[0,\frac{1}{2})$. 
The sequence of functionals 
$$E_\e(u)=\frac{1}{\e^{1-2s_\e}}\lfloor u\rfloor_{s_\e}^2(\Omega_\e)$$ 
$\Gamma$-converges with respect to the convergence in Definition {\rm\ref{def2}} to the functional 
$$E(v)=C_{s_0,d}\int_{\omega}\int_0^1\int_0^1\frac{|v(x^\prime, x_d)-v(x^\prime, y_d)|^2}{|x_d-y_d|^{1+2s_0}}\, dx_d\, dy_d\, dx^\prime.$$
\end{theorem}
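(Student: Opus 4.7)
My approach is to prove the two $\Gamma$-convergence inequalities separately, using as guiding identity the change of variables $x_d=\e t$, $y_d=\e\sigma$, and $x'=y'+\e z'$, which rewrites
\[
E_\e(u_\e)=\int_\omega\int_{(\omega-y')/\e}\int_0^1\int_0^1\frac{|v_\e(y'+\e z',t)-v_\e(y',\sigma)|^2}{(|z'|^2+|t-\sigma|^2)^{(d+2s_\e)/2}}\,dt\,d\sigma\,dz'\,dy'.
\]
The identity $C_{s,d}=\int_{\mathbb R^{d-1}}(1+|\xi|^2)^{-(d+2s)/2}\,d\xi$ already used in the proof of Lemma \ref{vert-lemma} makes it transparent that integrating out $z'\in\mathbb R^{d-1}$ in the limit produces precisely the sharp constant $C_{s_0,d}$.

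For the $\Gamma$-liminf, given any sequence $u_\e\to v$ in the sense of Definition \ref{def2}, I would restrict the above representation to $y'\in\omega_\delta$ and $z'\in B_R$, which guarantees $y'+\e z'\in\omega$ whenever $\e R\leq\delta$. Strong $L^2_{\rm loc}$ convergence of $v_\e$ to $v$ together with continuity of translations in $L^2$ gives $v_\e(\cdot+\e z',\cdot)\to v$ in $L^2(\omega_\delta\times(0,1))$ for each $z'$, hence by dominated convergence in the $z'$ variable also in $L^2(\omega_\delta\times B_R\times(0,1))$. Extracting a common subsequence along which these convergences hold a.e.\ and applying Fatou's lemma (legitimate since $s_\e\to s_0$ makes the positive kernel converge pointwise) gives
\[
\liminf_{\e\to 0}E_\e(u_\e)\geq\int_{\omega_\delta}\int_{B_R}\int_0^1\int_0^1\frac{|v(y',t)-v(y',\sigma)|^2}{(|z'|^2+|t-\sigma|^2)^{(d+2s_0)/2}}\,dt\,d\sigma\,dz'\,dy',
\]
and sending $R\to+\infty$ and $\delta\to 0^+$ the $z'$-integral collapses to $C_{s_0,d}/|t-\sigma|^{1+2s_0}$, yielding exactly $E(v)$.

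For the $\Gamma$-limsup, for smooth $v$ with compact support in $\omega\times(0,1)$ I would take the natural candidate $u_\e(x',x_d)=v(x',x_d/\e)$, so that $v_\e\equiv v$ and convergence is automatic. Smoothness supplies the equi-Lipschitz hypothesis in $x'$ required by Lemma \ref{vert-lemma}; its second inequality, combined with the rescaling $\int_0^\e\int_0^\e|x_d-y_d|^{-1-2s_\e}(\cdots)\,dx_d\,dy_d=\e^{1-2s_\e}\int_0^1\int_0^1|t-\sigma|^{-1-2s_\e}(\cdots)\,dt\,d\sigma$, gives
\[
\frac{1}{\e^{1-2s_\e}}\lfloor u_\e\rfloor_{s_\e}^2(\Omega_\e^\tau)\leq(1+\eta)C_{s_\e,d}\int_{\omega_\tau}\int_0^1\int_0^1\frac{|v(x',t)-v(x',\sigma)|^2}{|t-\sigma|^{1+2s_\e}}\,dt\,d\sigma\,dx'+\frac{\e^{2s_\e}\phi(\e)}{\eta}.
\]
The missing contribution $\lfloor u_\e\rfloor_{s_\e}^2(\Omega_\e)-\lfloor u_\e\rfloor_{s_\e}^2(\Omega_\e^\tau)$ must be estimated directly: since $u_\e$ vanishes outside the support of $v$ (which lies inside $\omega_{2\tau}$), this boundary piece is of order $\e^2/\tau^{1+2s_\e}$ and hence $o(\e^{1-2s_\e})$. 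Sending $\e\to 0$, then $\eta,\tau\to 0$, and finally approximating a general $v$ in the domain of $E$ by smooth compactly supported functions through a standard density and diagonal argument yields $\limsup E_\e(u_\e)\leq E(v)$.

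The main difficulty lies in the $\Gamma$-liminf inequality. Lemma \ref{vert-lemma} requires equi-Lipschitz regularity in $x'$, which is naturally available for the recovery sequence of the upper bound but cannot be assumed for a generic sequence along which we compute the $\liminf$. The resolution is to avoid invoking Lemma \ref{vert-lemma} on the lower-bound side altogether and work instead with the change-of-variables representation above, where the sharp constant $C_{s_0,d}$ emerges from a Fatou argument that requires only strong $L^2_{\rm loc}$ convergence of $v_\e$ and no a priori regularity of the sequence.
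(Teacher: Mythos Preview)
Your argument is correct. The upper bound matches the paper's: both take the trivial recovery sequence $u_\e(x',x_d)=v(x',x_d/\e)$, invoke inequality \eqref{vert-est2} of Lemma~\ref{vert-lemma}, and finish by density. You are in fact more careful than the paper in explicitly controlling the boundary remainder $\lfloor u_\e\rfloor_{s_\e}^2(\Omega_\e)-\lfloor u_\e\rfloor_{s_\e}^2(\Omega_\e^\tau)$; the power of $\tau$ you quote is not quite right (one gets $\e^2\tau^{-d-2s_\e}$ rather than $\e^2\tau^{-1-2s_\e}$), but only the factor $\e^2=o(\e^{1-2s_\e})$ matters.

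Your lower bound, however, follows a genuinely different route from the paper's. The paper cannot apply Lemma~\ref{vert-lemma} to an arbitrary $u_\e$ because of the equi-Lipschitz hypothesis \eqref{equilip}; it therefore first truncates (so that $\|u_\e\|_\infty$ is bounded), then mollifies in $x'$ by a kernel of fixed width $\tau$ to obtain $u_\e^\tau$, uses $\lfloor u_\e\rfloor_{s_\e}^2(\Omega_\e)\ge \lfloor u_\e^\tau\rfloor_{s_\e}^2(\Omega_\e^\tau)$, applies \eqref{vert-est1} to $u_\e^\tau$, passes to the limit via Fatou on the resulting one-dimensional vertical seminorm, and finally lets $\tau\to 0$. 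You instead bypass Lemma~\ref{vert-lemma} entirely on the liminf side: the explicit change of variables rewrites $E_\e$ so that the constant $C_{s_0,d}$ emerges directly from the $z'$-integration, and a Fatou argument on the joint variables $(y',z',t,\sigma)$ does the rest. Your route is more elementary and requires neither the truncation nor the $L^\infty$ bound; the paper's route is more uniform, in that both inequalities of the $\Gamma$-limit are read off from the same two-sided estimate in Lemma~\ref{vert-lemma}.
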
 

\begin{proof}
Let $u_\e\to v$ as in Definition \ref{def2}. 
Note that we can suppose that the sequence $\{u_\e\}$ is equibounded in $L^\infty(\Omega_\e)$ up to a truncation argument, since the functionals $E_\e$ decrease by truncation. 
Let $\tau>0$ and $\varphi_\tau\colon\mathbb R^{d-1}\to [0,+\infty)$ be a mollifier with support in $B_\tau(0)$. 
We set $u_\e^\tau(x^\prime,x_d)=u_\e \ast\varphi_\tau$, where the convolution is performed in the variable $x^\prime$. Noting that 
$$\lfloor u_\e\rfloor_{s_\e}^2(\Omega_\e)\geq \lfloor u_\e^\tau\rfloor_{s_\e}^2(\omega_\tau\times(0,\e)),$$ 
we can apply Lemma \ref{vert-lemma} to the sequence $\{u_\e^\tau\}$ with $\Omega_\e$ replaced by $\Omega_\e^\tau=\omega_{\tau}\times (0,\e)$. Then, by \eqref{vert-est1} and the arbitrariness of $\eta>0$ therein, noting that $\e^{2s_\e}\phi(\e)\to 0$ we get 
\begin{eqnarray*}
\liminf_{\e\to 0} E_\e(u_\e)&\geq&C_{s_0,d} \liminf_{\e\to 0}\e^{2s_\e-1}\int_{\omega_{2\tau}}\int_0^\e\int_0^\e\frac{|u^\tau_\e(x^\prime, x_d)-u^\tau_\e(x^\prime, y_d)|^2}{|x_d-y_d|^{1+2s_\e}}\, dx_d\, dy_d\, dx^\prime\\
&=&C_{s_0,d} \liminf_{\e\to 0}\int_{\omega_{2\tau}}\int_0^1\int_0^1\frac{|v^\tau_\e(x^\prime, x_d)-v^\tau_\e(x^\prime, y_d)|^2}{|x_d-y_d|^{1+2s_\e}}\, dx_d\, dy_d\, dx^\prime, 
\end{eqnarray*}
where $v^\tau_\e(x^\prime, x_d)=u^\tau_\e(x^\prime, \e x_d)$. 
If we also set $v^\tau=v\ast \varphi_\tau$, we have the convergence $v^\tau_\e\to v^\tau$ in $L^2(\omega_\tau\times (0,1))$. Hence, by Fatou's Lemma, 
\begin{eqnarray*}
&&\hspace{-2cm}\liminf_{\e\to 0}\int_{\omega_{2\tau}}\int_0^1\int_0^1\frac{|v^\tau_\e(x^\prime, x_d)-v^\tau_\e(x^\prime, y_d)|^2}{|x_d-y_d|^{1+2s_\e}}\, dx_d\, dy_d\, dx^\prime\\
&&\geq 
\int_{\omega_{2\tau}}\int_0^1\int_0^1\frac{|v^\tau(x^\prime, x_d)-v^\tau(x^\prime, y_d)|^2}{|x_d-y_d|^{1+2s_0}}\, dx_d\, dy_d\, dx^\prime. 
\end{eqnarray*}
Taking the limit as $\tau\to 0$ we obtain the lower bound. 

The upper bound for a  function $v\colon\mathbb R^{d-1}\times(0,1)\to\mathbb R$ such that  
\begin{equation*}
|v(x^\prime, x_d)-v(y^\prime, x_d)|\leq L|x^\prime-y^\prime|
\end{equation*}
for all $x^\prime,y^\prime\in \mathbb R^{d-1}$ and $x_d\in (0,1)$, is achieved by the trivial recovery sequence $u_\e(x^\prime, x_d)=v(x^\prime, \frac{x_d}{\e})$, and using \eqref{vert-est2}. For a general $v\in L^2(\omega\times (0,1))$ we can proceed by approximation.
\end{proof}

\section{Analysis at the critical scale for pointwise convergence} 
We have seen that by scaling by $\e^2$ the $\Gamma$-limit of (squared) seminorms in $H^s$ gives (squared) seminorms in $H^{s+\frac12}$, with a gain of regularity. In particular some characteristic functions belong to $H^2(\Omega_\e)$, but not to the limit fractional Sobolev space. 
In this section, we will examine the pointwise behaviour of a suitable scaling of $\lfloor u\rfloor_{s_\e}^2(\Omega_\e) $ as $\e, s_\e\to 0$ for discontinuous $u$, showing that it is not trivial at some scale $\lambda(s_\e,\e)$ at which the $\Gamma$-limit is $0$. We will restrict our analysis to the case $d=2$, the general case resulting only in technical changes (see Remark \ref{rem-d}).

It is not restrictive to assume that 
\begin{equation}\label{limes}
\lim_{\e\to 0}\e^{s_\e}=\varrho\in [0,1] 
\end{equation}
exists, up to passing to a subsequence.

\begin{lemma}[asymptotic estimate of the seminorm of piecewise-constant functions]\label{lemma-ct}  
Let $u\colon (a,b)\times (0,\e)\to \mathbb R$ be such that $u(x_1,x_2)=v(x_1)$, with 
$v$ piecewise constant.  
Then, 
\begin{equation}\label{ct-eq}
\lfloor u\rfloor_{s_\e}^2(\Omega_\e)=\frac{\e^{2-2s_\e}}{s_\e}(1-\varrho^2)\sum_{t\in S(v)} |v(t^+)-v(t^-)|^2+o\Big(\frac{\e^{2-2s_\e}}{s_\e}\Big)_{\e\to 0},
\end{equation}
where $\varrho=\lim\limits_{\e\to 0}\e^{s_\e}$ as defined in \eqref{limes}. 
\end{lemma}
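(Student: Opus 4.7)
The plan is to reduce to a single jump and then carry out an explicit asymptotic analysis of the remaining double integral via a rescaling in the thin direction. Write $v=\sum_i J_i\,\mathbf{1}_{\{x_1>t_i\}}$ (up to a constant) with $J_i=v(t_i^+)-v(t_i^-)$, so that
\[
|v(x_1)-v(y_1)|^2=\sum_i J_i^2\,\chi_i(x_1,y_1)+\sum_{i\neq k} J_i J_k\,\chi_i(x_1,y_1)\chi_k(x_1,y_1),
\]
where $\chi_i(x_1,y_1)=1$ iff $t_i$ lies strictly between $x_1$ and $y_1$. Cross terms force $|x_1-y_1|\geq c:=\min_{i\neq k}|t_i-t_k|$, so the corresponding inner vertical double integral is bounded by $\e^2 c^{-2-2s_\e}$, giving a total cross contribution $O(\e^2)=o(\e^{2-2s_\e}/s_\e)$, since $s_\e\to 0$ implies $s_\e\e^{2s_\e}\to 0$. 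It suffices to analyse a single jump.

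For $v=J\,\mathbf{1}_{\{x_1>0\}}$ on $(a,b)\ni 0$, by $x\leftrightarrow y$ symmetry
\[
\lfloor u\rfloor_{s_\e}^2(\Omega_\e)=2J^2\int\!\!\int_{\{x_1>0>y_1\}}\int_0^\e\!\!\int_0^\e\frac{dx_2\,dy_2}{((x_1-y_1)^2+(x_2-y_2)^2)^{1+s_\e}}\,dx_1\,dy_1.
\]
Rescaling the vertical variables by $\e$ writes the inner double integral as $\e^{-2s_\e}H(r/\e,s_\e)$, where $r=x_1-y_1$ and
\[
H(\rho,s)=\int_0^1\!\!\int_0^1\frac{d\alpha\,d\beta}{(\rho^2+(\alpha-\beta)^2)^{1+s}}.
\]
Integrating out the transverse coordinate at fixed $r$ produces a weight $\ell(r)$ that equals $r$ for $r\leq\min(b,-a)$, vanishes for $r\geq b-a$, and is bounded by $r$ otherwise. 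After $\rho=r/\e$,
\[
\lfloor u\rfloor_{s_\e}^2(\Omega_\e)=2J^2\e^{2-2s_\e}\int_0^{(b-a)/\e}\frac{\ell(\e\rho)}{\e}\,H(\rho,s_\e)\,d\rho.
\]

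The analytic core is the asymptotics of this $\rho$-integral. For $\rho\geq 1$, a Taylor expansion in $(\alpha-\beta)^2/\rho^2$ gives $H(\rho,s_\e)=\rho^{-2-2s_\e}+O(\rho^{-4-2s_\e})$ uniformly in $s_\e\in(0,1/2)$; combined with $\ell(\e\rho)/\e=\rho$ for $\rho\leq\min(b,-a)/\e$, this yields
\[
\int_1^{\min(b,-a)/\e}\rho^{-1-2s_\e}\,d\rho+O(1)=\frac{1-\min(b,-a)^{-2s_\e}\e^{2s_\e}}{2s_\e}+O(1)=\frac{1-\varrho^2}{2s_\e}+o(1/s_\e),
\]
since $\min(b,-a)^{-2s_\e}\e^{2s_\e}\to\varrho^2$ by \eqref{limes}. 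On $\rho\in(0,1)$ the bound $H(\rho,s)\leq C\rho^{-1-2s}$ gives $\int_0^1\rho H(\rho,s_\e)\,d\rho\leq C\int_0^1\rho^{-2s_\e}\,d\rho=O(1)$ uniformly for $s_\e<1/2$, and the tail $\rho\in(\min(b,-a)/\e,(b-a)/\e)$, where $\ell(\e\rho)/\e\leq(b-a)/\e-\rho$ and $H\sim\rho^{-2-2s_\e}$, contributes $O(1)$ similarly. Multiplying by $2J^2\e^{2-2s_\e}$ produces the single-jump asymptotics $\frac{J^2\e^{2-2s_\e}(1-\varrho^2)}{s_\e}+o(\e^{2-2s_\e}/s_\e)$; summing over $t_i\in S(v)$ completes the proof. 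The main obstacle will be making the $O(1)$ errors in each regime uniform in $s_\e$ so that, once multiplied by $\e^{2-2s_\e}$, they remain $o(\e^{2-2s_\e}/s_\e)$; this uses both $s_\e$ bounded away from $1/2$ (for the near-origin bound on $H$) and $s_\e\to 0$ (so that the residual $O(\e^{2-2s_\e})$ is negligible compared to the singular leading scale $\e^{2-2s_\e}/s_\e$).
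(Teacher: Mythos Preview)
Your argument is correct and a bit more streamlined than the paper's. Both proofs reduce to a single jump and then isolate the dominant contribution from horizontal distances $r=|x_1-y_1|$ between order $\e$ and order $1$, but the reductions and the ``near/far'' separations are organised differently.

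For the reduction, the paper localises: it discards the region $|x_1-y_1|\geq\tau$ at cost $O(\e^2)$ and, for $\tau$ smaller than half the minimal gap between jump points, writes the remaining integral as a sum of disjoint contributions around each $a_j$. You instead expand $|v(x_1)-v(y_1)|^2$ algebraically and observe that cross terms force $|x_1-y_1|\geq c$; both routes give exactly the diagonal single-jump integrals plus an $O(\e^2)$ remainder.

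For the single-jump analysis, the paper introduces an auxiliary integer $k$, splits at $|x_1-y_1|=k\e$, uses the crude sandwich $|x_1-y_1|\leq|x-y|\leq\frac{\sqrt{k^2+1}}{k}|x_1-y_1|$ on the far piece, computes the resulting one-dimensional integrals explicitly to obtain two-sided bounds with a factor $k^2/(k^2+1)$, and then lets $k\to\infty$ inside a case analysis on $\varrho$. Your rescaling $\rho=r/\e$ and Taylor expansion $H(\rho,s)=\rho^{-2-2s}+O(\rho^{-4-2s})$ replace the $k$-device by a direct asymptotic, so the three cases $\varrho=0$, $\varrho\in(0,1)$, $\varrho=1$ are handled uniformly by the single identity $\int_1^M\rho^{-1-2s_\e}d\rho=(1-M^{-2s_\e})/(2s_\e)$ together with $M^{-2s_\e}\to\varrho^2$.

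One small inaccuracy: on the tail $\rho\in(\min(b,-a)/\e,(b-a)/\e)$ the bound $\ell(\e\rho)/\e\leq (b-a)/\e-\rho$ holds only on the last subinterval; on the middle subinterval $\ell(\e\rho)/\e=\min(b,-a)/\e$. Either way $\ell(r)\leq\min(b,-a)$ globally, and combining this with $H(\rho,s_\e)\leq C\rho^{-2-2s_\e}$ gives a tail contribution bounded by $\frac{\min(b,-a)}{\e}\cdot\frac{(\min(b,-a)/\e)^{-1-2s_\e}}{1+2s_\e}=O(1)$, so the conclusion stands.
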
 
\begin{proof}
We can write the piecewise-constant function $v$ as 
$v(t)=\sum_{j=1}^{N+1} \lambda_j \chi_{(a_{j-1}, a_j)}(t)$, with 
$a_{j-1}<a_j$ for all $j\in\{1,\dots, N+1\}$, $a=a_0$, $b=a_{N+1}$ and $\lambda_j\in\mathbb R$.

For all $\tau>0$, there exists $C_\tau$ not depending on $\e$ such that we can estimate 
\begin{equation}\label{lont_tau}
\int_{\Omega_\e}\int_{\Omega_\e\cap\{|x_1-y_1|\geq\tau\}}\frac{|u(x)-u(y)|^2}{|x-y|^{2+2s_\e}}  \, dx\, dy 
\leq C_\tau\e^{2}=o\Big(\frac{\e^{2-2s_\e}}{s_\e}\Big)_{\e\to 0}.\end{equation}
Hence, in the computation of $s_\e$-seminorm in \eqref{ct-eq}, the integral over the set where $|x_1-y_1|\geq\tau$ is negligible since $s_\e \e^{2s_\e}\to 0$ as $\e\to 0$. 

By choosing $\tau$ small enough, this observation allows us to reduce the computation to the sum of contributions each one close to a jump point. 
Indeed, fixing $\tau\in\big(0,\frac{1}{2}\min\{a_j-a_{j-1}\}\big)$ and setting $I_j^{\tau,\e}=(a_j-\tau, a_j+\tau)\times(0,\e)$, we get 
\begin{equation}\label{salti_sep}
\int_{\Omega_\e}\int_{\Omega_\e\cap\{|x_1-y_1|<\tau\}}\frac{|u(x)-u(y)|^2}{|x-y|^{2+2s_\e}}  \, dx\, dy
=\sum_{j=1}^N   \int_{I_j^{\tau,\e}}\int_{I_j^{\tau,\e}}\frac{|u(x)-u(y)|^2}{|x-y|^{2+2s_\e}}  \, dx\, dy.
\end{equation} 
Each one of these integrals related to the jump point $a_j$ can be computed separately. 

Let $k\in\mathbb N$ be fixed; for all $\e>0$ such that $k\e<\tau$, 
we can write 
\begin{equation*}
\int_{I_j^{\tau,\e}}\int_{I_j^{\tau,\e}}\frac{|u(x)-u(y)|^2}{|x-y|^{2+2s_\e}}  \, dx\, dy= 
2h_j^2(I_1+I_2),  
\end{equation*}
where $h_j=v(a_j^+)-v(a_{j}^-)$, and $I_1$ and $I_2$, after a translation of $a_j$ to $0$, are 
\begin{eqnarray*}
I_1&=&\int_{(-k\e,0)\times (0,\e)}\int_{(0,k\e)\times (0,\e)}\frac{1}{|x-y|^{2+2s_\e}}  \, dx\, dy\\
I_2&=&\int_{(-\tau,0)\times (0,\e)}\int_{(k\e,\tau)\times (0,\e)}\frac{1}{|x-y|^{2+2s_\e}}  \, dx\, dy\\ 
&&+\int_{(-\tau,-k\e)\times (0,\e)}\int_{(0,k\e)\times (0,\e)}\frac{1}{|x-y|^{2+2s_\e}}  \, dx\, dy, 
\end{eqnarray*} 
independent of $j$.

Note that, if $|x_1-y_1|\geq k\e$, 
then $|x_1-y_1|^2\leq |x-y|^2\leq \frac{k^2+1}{k^2}|x_1-y_1|^2$; hence, to estimate $I_2$ both from above and below, we can consider $y_1-x_1$ in the place of $|x-y|$. 
By computing the integrals we obtain 
\begin{equation}\label{stimacost1}
\frac{k^2}{k^2+1}\frac{\e^2}{2s_\e} \phi(s_\e) \Big(\big(\frac{k\e}{\tau}\big)^{-2s_\e}-1\Big) \leq I_2\leq \frac{\e^2}{2s_\e} \phi(s_\e) \Big(\big(\frac{k\e}{\tau}\big)^{-2s_\e}-1\Big), 
\end{equation}
where $\phi(s)=\frac{(2-2^{-s})\tau^{-2s}}{1+2s}=1+O(s)_{s\to 0}$.  
As for the first integral, since we can suppose $s_\e<\frac{1}{2}$, we estimate 
\begin{eqnarray}\label{stimacost2}
I_1&\leq&\int_{(-\e,0)\times(0,\e)} \int_{B_{3\e}(x)\setminus B_{|x_1|}(x)}\frac{1}{|x-y|^{2+2s_\e}}\, dx 
=\frac{\pi\e^{2-2s_\e}}{s_\e}\big( \frac{1}{1-2s_\e}  -3^{-2s_\e}\big)\nonumber\\
&=& \frac{\e^{2-2s_\e}}{s_\e} O(s_\e)_{\e\to 0}.  
\end{eqnarray} 
We now compute the pointwise limit of $\frac{s_\e}{\e^{2-2s_\e}}\lfloor u\rfloor_{s_\e}^2(\Omega_\e)$. 
Recalling that $\phi(s_\e)- 1=O(s_\e)_{\e\to 0}$, estimates \eqref{stimacost1} and \eqref{stimacost2} imply that  
\begin{equation}\label{stimacost3}
\frac{k^2}{k^2+1}\e^{2s_\e} \Big(\big(\frac{k\e}{\tau}\big)^{-2s_\e}-1\Big) h_j^2\leq 
\frac{s_\e}{\e^{2-2s_\e}}\lfloor u\rfloor_{s_\e}^2(I_j^{\tau,\e})\leq\e^{2s_\e}\Big(\big(\frac{k\e}{\tau}\big)^{-2s_\e}-1\Big)h_j^2
\end{equation}
up to a term of order $s_\e$ ($O(s_\e)_{\e\to 0}$). 
\begin{enumerate}
\item[$\bullet$] $s_\e\log\e\to -\infty$.
In this case, $\e^{s_\e}\to 0$; by \eqref{stimacost3} it follows that 
\begin{equation}\label{stimacost4}
\frac{k^{2}}{k^2+1} h_j^2
\leq \lim_{\e\to 0}\frac{s_\e}{\e^{2-2s_\e}}\lfloor u\rfloor_{s_\e}^2(I_j^{\tau,\e})\leq 
h_j^2.
\end{equation}
Recalling \eqref{lont_tau} and \eqref{salti_sep}, and summing up over the jump set of $v$, since $k\in \mathbb N$ is arbitrary, we then obtain 
$$\lim_{\e\to 0} \frac{s_\e}{\e^{2-2s_\e}}\lfloor u\rfloor_{s_\e}^2(\Omega_\e)=\sum_{t\in S(v)} |v(t^+)-v(t^-)|^2.$$
\item[$\bullet$] $\lim\limits_{\e\to 0}s_\e\log\e\in(-\infty,0)$.
In this case, $\e^{s_\e}\to \varrho\in(0,1)$; by \eqref{stimacost3} it follows that 
\begin{equation}\label{stimacost5}
\frac{k^2}{k^2+1}(1-\varrho^2) h_j^2\leq 
\lim_{\e\to 0}\frac{s_\e}{\e^{2-2s_\e}}\lfloor u\rfloor_{s_\e}^2(I_j^{\tau,\e})\leq(1-\varrho^2)h_j^2. 
\end{equation}
Recalling \eqref{lont_tau} and \eqref{salti_sep}, and summing up over the jumps set of $v$, we then obtain 
$$\lim_{\e\to 0} \frac{s_\e}{\e^{2-2s_\e}}\lfloor u\rfloor_{s_\e}^2(\Omega_\e)=(1-\varrho^2)\sum_{t\in S(v)} |v(t^+)-v(t^-)|^2.$$
\item[$\bullet$] $\lim\limits_{\e\to 0}s_\e\log\e=0$.
In this case, $\e^{s_\e}\to 1$, and \eqref{stimacost3} implies that $\lim\limits_{\e\to 0}\frac{s_\e}{\e^{2-2s_\e}}\lfloor u\rfloor_{s_\e}^2(I_j^{\tau,\e})=0.$ 
Recalling \eqref{lont_tau} and \eqref{salti_sep}, we obtain
$$\lim_{\e\to 0} \frac{s_\e}{\e^{2-2s_\e}}\lfloor u\rfloor_{s_\e}^2(\Omega_\e)=0.$$ 
\end{enumerate} 
\end{proof}  
This asymptotic analysis suggests to consider the scaled functionals 
$$F_\e^{s}(u)=
\frac{1}{\lambda(s_\e,\e)}\lfloor u\rfloor_{s}^2(\Omega_\e),$$
where 
\begin{equation}\label{lambda}
\lambda(s,\e)=\begin{cases}
\e^{2-2s}s^{-1} & {\rm if }\ \varrho=0\\
\e^{2}s^{-1} & {\rm if }\ \varrho\in (0,1)\\
\e^2|\log\e| & {\rm if }\ \varrho=1.
\end{cases}
\end{equation}

Note that in all cases $\lambda(s,\e)>\!>\e^2$. 
In the case $\varrho=0$; that is, $s_\e=o(|\log\e|^{-1})_{\e\to 0}$, the form of $\lambda(s,\e)$ is obtained from the expansion in \eqref{stimacost3}. Since ...  
\begin{equation}\label{exp}
\Big(\frac{k\e}{\tau}\Big)^{-2s_\e}-1=e^{-2s_\e(\log \e+\log k-\log\tau)}-1=-2s_\e\ln \e
+o(s_\e\log\e)_{\e\to 0},
\end{equation}
by \eqref{stimacost3} it follows that $\frac{s_\e}{\e^{2-2s_\e}}\lfloor u\rfloor_{s_\e}^2(\Omega_\e)$ is of order $s_\e|\log \e|$; that is, $\lfloor u\rfloor_{s_\e}^2(\Omega_\e)\sim \e^{2}|\log \e|$. 
In order to treat the case $\varrho=1$, we need some restriction on the exponent $s_\e$. 
\begin{lemma}[pointwise limit in the case $\varrho=1$]\label{lemma-log}   
Let 
$$\e^2 |\log\e|^{-1} <\!<s_\e<\!< |\log\e|^{-1}$$ 
as $\e\to 0$.  
Let $u\colon (a,b)\times (0,\e)\to \mathbb R$ be such that $u(x_1,x_2)=v(x_1)$, with 
$v$ piecewise constant. 
Then,
$$\lim_{\e\to 0} \frac{1}{\e^2|\log\e|}\lfloor u \rfloor^2_{s_\e}(\Omega_\e)=2\sum_{t\in S(v)} |v(t^+)-v(t^-)|^2.$$ 
\end{lemma}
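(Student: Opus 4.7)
The plan is to transfer the calculation of Lemma~\ref{lemma-ct} to the new scale $\lambda = \e^{2}|\log\e|$, by re-examining the two-sided pinch \eqref{stimacost3} in the regime $s_\e|\log\e|\to 0$ (which corresponds to $\varrho=1$) through the expansion \eqref{exp}. First I would discard the long-range contribution: by \eqref{lont_tau} it is bounded by $C_\tau\e^{2} = o(\e^{2}|\log\e|)$. Then \eqref{salti_sep} reduces the problem to computing the limit of $(\e^{2}|\log\e|)^{-1}\lfloor u\rfloor_{s_\e}^{2}(I_j^{\tau,\e})$ at each jump point $a_j$ separately.

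Fix such a jump, with height $h_j = v(a_j^+)-v(a_j^-)$. Since $s_\e\log\e\to 0$, the expansion \eqref{exp} yields
\[
\Big(\frac{k\e}{\tau}\Big)^{-2s_\e}-1 \;=\; 2\,s_\e\,|\log\e|\,(1+o(1)),
\]
while $\e^{2s_\e}=1+o(1)$ and $\e^{2-2s_\e}=\e^{2}(1+o(1))$. Substituting these asymptotics into \eqref{stimacost3} and dividing by $s_\e\,\e^{-2s_\e}|\log\e|$ I would obtain
\[
\frac{k^{2}}{k^{2}+1}\bigl(2+o(1)\bigr)h_j^{2} \;\le\; \frac{1}{\e^{2}|\log\e|}\lfloor u\rfloor_{s_\e}^{2}(I_j^{\tau,\e}) \;\le\; \bigl(2+o(1)\bigr)h_j^{2}.
\]
Passing to $\e\to 0$ and then $k\to\infty$ would give $(\e^{2}|\log\e|)^{-1}\lfloor u\rfloor_{s_\e}^{2}(I_j^{\tau,\e})\to 2h_j^{2}$. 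Summing over $a_j\in S(v)$ and invoking the negligibility of the long-range tail yields the claim.

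The main obstacle will be the bookkeeping of error terms: one has to check that the quadratic remainder $O((s_\e|\log\e|)^{2})$ in \eqref{exp}, the correction $\phi(s_\e)-1 = O(s_\e)$ from \eqref{stimacost1}, the short-distance piece $I_1 = O(\e^{2-2s_\e})$ from \eqref{stimacost2}, the $O(s_\e)$ remainder in \eqref{stimacost3} and the long-range $O_\tau(\e^{2})$ contribution from \eqref{lont_tau} are each $o(\e^{2}|\log\e|)$. The upper bound $s_\e\ll|\log\e|^{-1}$ is what makes the Taylor expansion valid and ensures $\e^{\pm 2s_\e}\to 1$, while the lower bound $s_\e\gg\e^{2}|\log\e|^{-1}$ pinpoints $\e^{2}|\log\e|$ as the non-trivial scale separating the present pointwise asymptotics from those already handled in Lemma~\ref{lemma-ct}. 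I expect no conceptual novelty beyond this careful accounting.
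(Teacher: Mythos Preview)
Your proposal is correct and follows essentially the same route as the paper: reuse the localisation \eqref{lont_tau}--\eqref{salti_sep} and the two-sided pinch \eqref{stimacost3} from Lemma~\ref{lemma-ct}, insert the expansion \eqref{exp} in the regime $s_\e|\log\e|\to 0$, and let $k\to\infty$ after passing to the limit. The paper's own proof is just a one-line pointer back to Lemma~\ref{lemma-ct}, so your more explicit accounting of the error terms (each of which is indeed $o(\e^2|\log\e|)$ under the stated hypotheses) only makes the argument cleaner.
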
 
\begin{proof}
Noting that, since $\e^2 |\log\e|^{-1} <\!<s_\e$, estimate \eqref{lont_tau} implies that also for $G^{s_\e}_\e(u)$ the integral over the set where $|x_1-y_1|\geq\tau$ is negligible, we can follow word for word the proof of Lemma \ref{lemma-ct}. 
By \eqref{stimacost3} and \eqref{exp}, we get 
\begin{equation}\label{stimaG1}
\frac{k^2}{k^2+1}2h_j^2 \leq 
\frac{1}{\e^2|\log\e|}\lfloor u \rfloor^2_{s_\e}(I_{j}^{\e,\tau})
\leq 2h_j^2, 
\end{equation}
up to a term which is infinitesimal as $\e\to 0$. 
We then conclude 
$$\lim_{\e\to 0} \frac{1}{\e^2|\log\e|}\lfloor u \rfloor^2_{s_\e}(\Omega_\e)=2\sum_{t\in S(v)} |v(t^+)-v(t^-)|^2.$$
\end{proof} 

By Lemma \ref{lemma-ct} and Lemma \ref{lemma-log} we obtain the following pointwise-convergence result. 

\begin{proposition}\label{pconv} Let $u\colon (a,b)\times (0,\e)\to \mathbb R$ be such that $u(x_1,x_2)=v(x_1)$, with $v$ piecewise constant, $\lambda(s,\e)$ be defined as in \eqref{lambda}, and $s_\e\to 0$ as $\e\to 0$.  
Then: 
\begin{enumerate}
    \item[\rm (i)] if $s_\e>\!> |\log\e|^{-1}$, then 
    $$\lim_{\e\to 0}\frac{1}{\lambda(s_\e,\e)}\lfloor u \rfloor^2_{s_\e}(\Omega_\e)=\sum_{t\in S(v)}|v(t^+)-v(t^-)|^2;$$
    \item[\rm (ii)] if $s_\e\sim |\log\e|^{-1}$, then 
 $$\lim_{\e\to 0}\frac{1}{\lambda(s_\e,\e)}\lfloor u \rfloor^2_{s_\e}(\Omega_\e)=\frac{1-\varrho^2}{\varrho^2}\sum_{t\in S(v)}|v(t^+)-v(t^-)|^2,$$
    where $\varrho=\lim\limits_{\e\to 0}\e^{s_\e}$;
    \item[\rm(iii)] if $|\log\e|^{-1}>\!> s_\e >\!> \e^2 |\log\e|^{-1}$, then 
     $$\lim_{\e\to 0}\frac{1}{\lambda(s_\e,\e)}\lfloor u \rfloor^2_{s_\e}(\Omega_\e)=2\sum_{t\in S(v)}|v(t^+)-v(t^-)|^2.$$
\end{enumerate}
\end{proposition}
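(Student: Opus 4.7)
The plan is to reduce the proposition to the two preceding lemmas by matching the three regimes of $s_\e$ with the three cases for $\varrho=\lim_{\e\to 0}\e^{s_\e}$, since $\e^{s_\e}=e^{s_\e\log\e}$. Concretely, writing $s_\e|\log\e|=-s_\e\log\e$, we have: case (i) corresponds to $s_\e|\log\e|\to+\infty$, hence $\varrho=0$; case (ii) corresponds to $s_\e|\log\e|\to c$ for some $c\in(0,+\infty)$, hence $\varrho=e^{-c}\in(0,1)$; and case (iii) corresponds to $s_\e|\log\e|\to 0$, hence $\varrho=1$. My first step is just to record these three equivalences.

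The plan for cases (i) and (ii) is to apply Lemma \ref{lemma-ct} directly and then divide by the appropriate $\lambda(s_\e,\e)$ from \eqref{lambda}. In case (i), $\lambda=\e^{2-2s_\e}/s_\e$, so Lemma \ref{lemma-ct} gives immediately
\[
\lim_{\e\to 0}\frac{1}{\lambda(s_\e,\e)}\lfloor u\rfloor^2_{s_\e}(\Omega_\e)=(1-\varrho^2)\sum_{t\in S(v)}|v(t^+)-v(t^-)|^2=\sum_{t\in S(v)}|v(t^+)-v(t^-)|^2,
\]
since $\varrho=0$. In case (ii), $\lambda=\e^2/s_\e$, and dividing Lemma \ref{lemma-ct} by $\lambda$ yields an extra factor $\e^{-2s_\e}\to\varrho^{-2}$, so the limit is $\varrho^{-2}(1-\varrho^2)\sum_{t\in S(v)}|v(t^+)-v(t^-)|^2=\tfrac{1-\varrho^2}{\varrho^2}\sum_{t\in S(v)}|v(t^+)-v(t^-)|^2$, which matches the statement.

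For case (iii) the plan is different because Lemma \ref{lemma-ct} degenerates: $1-\varrho^2\to 0$ and so the leading term vanishes, giving only $o(\e^{2-2s_\e}/s_\e)$ information, which is no longer the correct scale. Here $\lambda=\e^2|\log\e|$, and the hypothesis $\e^2|\log\e|^{-1}\ll s_\e\ll|\log\e|^{-1}$ is exactly the range for which Lemma \ref{lemma-log} applies, giving directly
\[
\lim_{\e\to 0}\frac{1}{\e^2|\log\e|}\lfloor u\rfloor^2_{s_\e}(\Omega_\e)=2\sum_{t\in S(v)}|v(t^+)-v(t^-)|^2.
\]

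The only mildly delicate point is case (iii) and its connection with the expansion \eqref{exp}: one must observe that in the $\varrho=1$ regime the quantity $\e^{2s_\e}((k\e/\tau)^{-2s_\e}-1)$ appearing in \eqref{stimacost3} is asymptotic to $-2s_\e\log\e$ rather than to a finite non-zero constant, so that the correct scale is $s_\e|\log\e|\cdot\e^{2-2s_\e}/s_\e\sim\e^2|\log\e|$, giving the factor $2$. This is precisely the content of Lemma \ref{lemma-log}, so no additional work is needed; the proposition follows by combining the three cases.
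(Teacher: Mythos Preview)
Your proof is correct and follows exactly the approach of the paper, which simply states that the proposition is obtained ``by Lemma \ref{lemma-ct} and Lemma \ref{lemma-log}''. Your write-up is in fact more explicit than the paper's, correctly matching each regime of $s_\e$ with the corresponding value of $\varrho$ and the corresponding choice of $\lambda(s_\e,\e)$, and properly handling the extra factor $\e^{-2s_\e}\to\varrho^{-2}$ in case~(ii).
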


\begin{remark}[$d$-dimensional estimate]\label{rem-d}\rm 
In the case $d>2$, the analog of Lemma \ref{lemma-ct} gives an estimate for the asymptotic behaviour of the $s$-seminorm.  
As a result, if $u\colon \omega\times (0,\e)\to \mathbb R$ is such that $u(x^\prime,x_d)=v(x^\prime)$, with 
$v$ piecewise constant, and $\varrho=\lim\limits_{\e\to 0}\e^{s_\e}\in [0,1]$, then 
$$\lfloor u \rfloor_{s_\e}^2(\Omega_\e)\sim \frac{\e^{2-2s_\e}}{s_\e}\int_{S(v)} |v((x^\prime)^+)-v((x^\prime)^-)|^2\, d\mathcal H^{d-2}(x^\prime),$$ 
giving a non-trivial pointwise limit as in Proposition \ref{pconv}. 
\end{remark}

\bibliographystyle{abbrv}
\bibliography{references}

\end{document}